\newcommand{\apref}[3]{\hyperref[#2]{#1\ref*{#2}#3}}
\DeclareMathOperator{\arcosh}{arcosh}
\DeclareMathOperator{\PSL}{PSL}
\DeclareMathOperator{\Ima}{Im}
\DeclareMathOperator{\Rea}{Re}
\DeclareMathOperator{\red}{Red}
\newcommand\N{\mathbb{N}}
\newcommand\R{\mathbb{R}}
\newcommand\Z{\mathbb{Z}}
\newcommand\C{\mathbb{C}}
\DeclareMathOperator{\vol}{vol}
\def\beq{\begin{eqnarray}}
\def\eeq{\end{eqnarray}}
\newcommand{\nn}{\nonumber}
\definecolor{pink}{rgb}{1,0,1}
\newtheorem{conjecture}{Conjecture}
\newcommand{\RN}[1]{%
	\textup{\uppercase\expandafter{\romannumeral#1}}%
}
\definecolor{blue}{rgb}{0,0,1}
\definecolor{red}{rgb}{1,0,0}
\definecolor{green}{rgb}{0,.6,.2}
\definecolor{purple}{rgb}{1,0,1}
\definecolor{brown}{rgb}{.59,.29,0}
\long\def\red#1\endred{{\color{red}#1}}
\long\def\blue#1\endblue{{\color{blue}#1}}
\long\def\purple#1\endpurple{{\color{purple} #1}}
\long\def\green#1\endgreen{{\color{green}#1}}
\long\def\brown#1\endbrown{{\color{brown}#1}}
\newcommand{\HH}{\mathbb{H}}
\DeclareMathOperator{\sech}{sech}
\DeclareMathOperator{\csch}{csch}
\definecolor{pink}{rgb}{1,0,1}     
\definecolor{blue}{rgb}{0,0,1}
\definecolor{red}{rgb}{1,0,0}
\definecolor{green}{rgb}{0,.6,.2}
\definecolor{purple}{rgb}{1,0,1}
\definecolor{brown}{rgb}{.59,.29,0}
\numberwithin{equation}{section}
\theoremstyle{definition}
\newtheorem{thm}{Theorem}[section]
\newtheorem*{thm*}{Theorem}
\newtheorem{defn}[thm]{Definition}
\newtheorem*{prop*}{Proposition}
\newtheorem{lemma}[thm]{Lemma}
\newcommand{\pmbK}{\pmb{K}}
\newcommand{\cR}{\mathcal{R}}
\renewcommand{\vol}{\operatorname{vol}}
\newcommand{\cN}{\mathcal{N}}
\definecolor{pink}{rgb}{1,0,1}
\title[Casimir energy of singular orbifolds]{Casimir energy of hyperbolic orbifolds with conical singularities}
\author[K.\@ Fedosova]{Ksenia Fedosova}
\address{Mathematisches Institut, University of M\"unster, 
Einsteinstr. 62, 48149 M\"unster, Germany}
\email{ksenia.fedosova@uni-muenster.de}
\author[J.\@ Rowlett]{Julie Rowlett}
\address{Mathematical Sciences, Chalmers University of Technology and University
of Gothenburg, 412 96 Gothenburg, Sweden}
\email{julie.rowlett@chalmers.se}
\author[G.\@ Zhang]{Genkai Zhang}
\address{Mathematical Sciences, Chalmers University of Technology and University of Gothenburg, 412 96 Gothenburg, Sweden}
\email{genkai@chalmers.se}
\begin{document}

\maketitle
\begin{abstract} 
	In this article, we obtain the explicit expression of the Casimir energy for 2-dimensional Clifford-Klein space forms in terms of the geometrical data of the underlying spacetime with the help of zeta-regularization techniques.  The spacetime is geometrically expressed as a compact hyperbolic orbifold surface that may have finitely many   conical singularities. 
 In computing the contribution to the energy from a conical singularity, we derive an expression of an elliptic orbital integral as an infinite sum of special functions.  We prove that this sum converges exponentially fast.  
 Additionally, we  show that under a natural assumption (known to hold asymptotically) on the growth of the lengths of primitive closed geodesics of the $(2, 3, 7)$-triangle group orbifold its Casimir energy is positive (repulsive).
\end{abstract}


\section{Introduction} \label{sec:intro} 
The Casimir energy is named after the Dutch physicist, Hendrik B. G. Casimir who showed in 1948 that two uncharged parallel metal plates alter the vacuum fluctuations in such a way as to attract each other.  This is now referred to as \em the Casimir effect. \em  The energy density between the plates, now known as the \em Casimir energy, \em was calculated to be negative.  The plates essentially  reduce the fluctuations in the gap between them creating negative energy and pressure, which pulls the plates together.  For this reason, negative Casimir energy is associated with an attractive force.  

The Casimir effect in different spacetimes is an important concept in cosmology \cite{MR3448446,MR4133746,MR2651030,MR2455082}, quantum field theory \cite{GRAHAM200249, MR2325409, MR2247334, MR2970520, BORDAG20011}, supergravity \cite{MR1894692, MR884592, MR811397}, superstring theory \cite{KIKKAWA1984357, BINETRUY198968}, hadronic physics \cite{QUEIROZ2005220}, and acoustic scattering \cite{sun2023numerical}. 
The evaluation of the Casimir effect for massless scalar fields (or spinor fields) has been obtained in, e.g.,  \cite{MR1156241} and \cite{PhysRevA80012503}. Moreover, in \cite{strohmaier2013algorithm}, the authors calculate the Casimir energy for several hyperbolic manifolds that include, among others, the Bolza surface. However, the aforementioned spacetimes do not allow singularities.  Hence, they exclude the possibility of exciting and crucial physical objects like Schwarzschild black holes and cosmic strings. Geometrically, these both would create a conical singularity~\cite{cosmicsing}, which is not featured in those geometric settings. 

Nonetheless, the Casimir energy in spacetimes that may have conical singularities has been studied by several authors including \cite{kkcone}, \cite{wedges}, and  \cite{supersymm}. However, the geometric context in the aforementioned works is somewhat restrictive.  Hence there is motivation to understand the Casimir energy in a broader context.  Here we calculate the Casimir energy in two dimensional spacetimes that admit an orbifold structure and may have finite many conical singularities.

A Riemannian orbifold is  singular generalization of a Riemannian manifold which is locally modeled on the quotient of a manifold under a finite group of isometries. The orbifolds were first introduced by Satake~\cite{MR79769} and, in the coming years, became important not only in mathematics, but also in cosmology and physics. 
For example, an ${SU(3)\times SU(2) \times U(1)}$ supersymmetric theory is constructed with an orbifold $\mathbb{S}^1 /(\Z / 2 \Z \times \Z / 2 \Z' )$;  the orbifold fixed points are crucial for the description of supersymmetric Yukawa
interactions~\cite{barbieri2001constrained, arkani2000self}.

Several articles are dedicated to the calculation of the Casimir energy, e.g. \cite{CasimirBrane} or \cite{MR1849734}, in the case of an orbifold. However, in the mentioned articles the authors have closed expressions for the vacuum modes (hat is, closed expressions for the eigenvalues of the Laplace operator). 
However in most cases it is
impossible to have precise formulas for the eigenvalues and consequently
it is a  natural problem to study the  positivity of
the Casimir energy.

To describe the orbifold surfaces in this work, let  $\Gamma$ be a discrete subgroup of the group of orientation-preserving isometries, $PSL_2(\R)$, acting on the hyperbolic upper half plane, $\mathbb H$.  Moreover, assume that the orbifold, $X = \Gamma \backslash \mathbb{H}$, obtained by taking the quotient of the hyperbolic upper half plane with $\Gamma$, is compact. We denote its volume by $\text{vol}(\Gamma \backslash \mathbb{H})$. Then, 
\begin{equation}\label{ineq:eigenvalues}
0 = \lambda_0 < \lambda_1 \leq \lambda_2 \leq \ldots \to \infty	
\end{equation}
 are the eigenvalues of the associated Laplace operator~$\Delta$ acting on $X$. The spectral zeta function,~$\zeta_\Gamma(s)$, of $X$ is defined for $\Rea(s)$  sufficiently large as 
\[\zeta_\Gamma(s) = \sum_{n \in \N} \lambda_n^{-s}.\]

With the help of the Selberg trace formula it is possible to show that the spectral zeta function admits a meromorphic continuation to $s\in \C$. Its value at $s=-1/2$ is referred to as the \textit{Casimir energy}.  Our goal is to give an expression for $\zeta_\Gamma(-1/2)$ in terms of geometric data of the orbifold.  The geometry of the orbifold is determined by the elements of the group $\Gamma$.  These are classed in the following two types. 

\begin{enumerate}
	\item A non-identity element, $R \in \Gamma$, is \textit{elliptic} if it is of finite order.  We note that any cyclic subgroup, $\mathcal{R}$ of finite order in $\Gamma$ is generated by a \textit{primitive elliptic element} $R_0$ of order $m_{\cR} \in \N$. This element $R_0$ may be chosen in $\PSL_2(\R)$ to be conjugate to \[\left( \begin{matrix}
		\cos (\pi / m_\mathcal{R}) & -\sin(\pi / m_\mathcal{R}) \\
		\sin(\pi / m_\mathcal{R}) & \cos(\pi / m_\mathcal{R})
	\end{matrix}\right).\] The angle, $\theta_\mathcal{R} = \pi / m_\mathcal{R}, $
is the smallest positive angle among all such angles determined by the elements of the group generated by $R_0$. We denote the set of all primitive elliptic elements of $\Gamma$ by~$ \{ \mathcal{R} \}_p$.

	\item An element $P \in \Gamma$ is \textit{hyperbolic} if it is $\text{PSL}_2(\R)$-conjugate to 
	\[ \begin{pmatrix} a(P) & 0 \\ 0 & a(P)^{-1} \end{pmatrix}, \]
	such that $1<a(P)$. The norm of $P$ is defined to be $NP := |a(P)|^2$. The element $P$ gives rise to a closed geodesic in $\Gamma \backslash \HH$ which has length $\ell_{P} = \log NP$. We let $k$ be the biggest positive integer such that $P=P_0^k$ for some $P_0 \in \Gamma$. If $k=1$, we say that $P=P_0$ is a \textit{primitive hyperbolic element}.  
 
 We denote the set of $\Gamma$-conjugacy classes of all  hyperbolic elements, respectively primitive hyperbolic elements, by $\{ \mathcal{P}\}$, respectively $\{ \mathcal{P}\}_p$. We additionally note that if $\Gamma$ has no elliptic elements, the set $\{ \mathcal{P}\}$ is in 1-to-1 correspondence with the set of oriented closed geodesics of $X$. 

%
\end{enumerate} 


We note that the compactness of $X$ excludes the possibility that $\Gamma$ contains so-called \textit{parabolic} elements. This is equivalent to saying that there are no elements $\gamma \in \Gamma$ that are $\PSL_2(\R)$-conjugated 
to 
\[ \begin{pmatrix} 1 & x \\ 0 & 1 \end{pmatrix} \]
for some $x \in \R \setminus \{ 0\}$. As many hyperbolic surfaces of interest are not compact, several authors studied values of spectral zeta function for such groups.  For example, in \cite{Hashimoto}, the author investigated certain values of the spectral zeta function in the case of presence of parabolic elements. However, since the presence of parabolic elements causes the surface to be non-compact, this changes the structure of the Laplace spectrum.  In particular it is no longer discrete.  Consequently, a modified approach is required to investigate the energy in that case which shall be the subject of the future work. 



We further recall the Struve function of the second kind and the modified Bessel function of the second kind. 
\begin{defn} \label{def:K_pmbK}
We denote by $\pmb{K}_{j}$ the $j$-th Struve function of the second kind, 
\[ \pmbK_j(z) = \pmb{H}_j(z) - Y_j (z).\]
Here, $\pmb{H}_j$  is the $j$-th Struve function of the first kind as defined in \cite[\S 10.4]{watson} (see also \cite[\S 11.2]{NIST}), and $Y_j$ the Bessel function of the second kind, also known as the Weber Bessel function defined in \cite[\S 3.53]{watson} (see also \cite[\S 10.2]{NIST}).  The modified Bessel function of the second kind of order~$j$ is denoted $K_j$ and defined in \cite[p. 64]{watson} (see also \cite[\S 10.27, 10.31]{NIST}). 
\end{defn}

With these preparations, we may now state our first main result.  

\begin{thm} \label{th:energy}
The Casimir energy of the orbifold $\Gamma \backslash \mathbb H$, 
\begin{align*}
\zeta_\Gamma(-1/2)& =\frac{ \vol(\Gamma \backslash \mathbb{H}) }{\pi} \sum_{n=0}^\infty \frac{n+1}{2^{n+6}} \sum_{k=0}^{n}  (-1)^{k+1}  \binom{n}{k} \frac{ \pmb{K}_{2} \left[ \pi(1 +   k) \right] }{(1+k)^2} \\
	& + \sum_{ \{ \mathcal{R} \}_p }  \sum_{\ell=1}^{m_\mathcal{R}-1} \frac{1}{8 m_{\mathcal{R}} \sin(\tfrac{\pi \ell}{m_\mathcal{R}})}   \sum _{n=0}^{\infty} \frac{1}{2^{n+1}}   \sum _{k=0}^n  (-1)^k  \binom{n}{k} 
	\frac{   \pmb{K}_{1}\left[ (k+\tfrac{ \ell}{m_\mathcal{R}}) \pi \right]} 
	{k+\tfrac{ \ell}{m_\mathcal{R}} } \\
	&  -\frac{1}{4 \pi }\sum_{ \{ \mathcal{P} \}_p} \sum_{n=1}^\infty \tfrac{1}{n}\csch(\tfrac{n \ell_\gamma}{2}) K_{1} (\tfrac{n \ell_\gamma}{2}).
\end{align*}
\end{thm}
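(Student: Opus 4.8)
\emph{Strategy.} I would route everything through the heat trace and the Selberg trace formula. For $\Rea(s)>1$ one has $\zeta_\Gamma(s)=\Gamma(s)^{-1}\int_0^\infty t^{s-1}\big(\Theta(t)-1\big)\,dt$ with $\Theta(t)=\sum_{n\ge0}e^{-\lambda_n t}$, the $-1$ removing $\lambda_0=0$. The trace formula applied to $h_t(r)=e^{-t(1/4+r^2)}$ gives $\Theta=\Theta_{\mathrm{id}}+\Theta_{\mathrm{ell}}+\Theta_{\mathrm{hyp}}$ (no parabolic terms, by compactness), where $\Theta_{\mathrm{id}}(t)=\tfrac{\vol(\Gamma\backslash\HH)}{4\pi}\int_{\R}e^{-t(1/4+r^2)}r\tanh(\pi r)\,dr$, $\Theta_{\mathrm{hyp}}(t)=\tfrac{e^{-t/4}}{\sqrt{4\pi t}}\sum_{\{\mathcal P\}}\tfrac{\ell_{\gamma_0}}{2\sinh(\ell_\gamma/2)}e^{-\ell_\gamma^2/(4t)}$, and $\Theta_{\mathrm{ell}}$ is a finite sum over $\{\mathcal R\}_p$ and $\ell\in\{1,\dots,m_{\mathcal R}-1\}$ of elliptic orbital integrals $\tfrac{1}{4m_{\mathcal R}\sin(\pi\ell/m_{\mathcal R})}\int_{\R}\tfrac{\cosh((\pi-2\pi\ell/m_{\mathcal R})r)}{\cosh(\pi r)}e^{-t(1/4+r^2)}\,dr$. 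Since the short-time heat expansion of a hyperbolic orbifold with conical points has only integral powers of $t$, $\zeta_\Gamma$ continues meromorphically to $\C$ with $s=-1/2$ a regular point (the three Mellin transforms having at worst poles at $s\in\{1,0,-1,\dots\}$, and the zero of $\Gamma(s)^{-1}$ at $s=0$ absorbing the subtracted $1$ and the constant heat coefficients). Interchanging the $t$- and $r$-integrals cancels $\Gamma(s)$ and reduces $\zeta_\Gamma(-1/2)$ to three pieces: $\tfrac{\vol(\Gamma\backslash\HH)}{2\pi}\,\fp\!\int_0^\infty r\tanh(\pi r)\sqrt{1/4+r^2}\,dr$; $\sum_{\{\mathcal R\}_p}\sum_\ell\tfrac{1}{4m_{\mathcal R}\sin(\pi\ell/m_{\mathcal R})}\int_{\R}\tfrac{\cosh((\pi-2\pi\ell/m_{\mathcal R})r)}{\cosh(\pi r)}\sqrt{1/4+r^2}\,dr$, which is already entire in $s$ because the $r$-integrand decays exponentially; and $\Gamma(-1/2)^{-1}\int_0^\infty t^{-3/2}\Theta_{\mathrm{hyp}}(t)\,dt$, whose integral converges at $s=-1/2$ precisely because $\Theta_{\mathrm{hyp}}(t)\to1$ as $t\to\infty$ (forced by $\lambda_0=0$ and the decay of the other two pieces) and $\Theta_{\mathrm{hyp}}(t)=O(e^{-c/t})$ as $t\to0^+$.

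\emph{Hyperbolic term.} Expanding $\Theta_{\mathrm{hyp}}$ and using $\int_0^\infty t^{\nu-1}e^{-at-b/t}\,dt=2(b/a)^{\nu/2}K_\nu(2\sqrt{ab})$ with $\nu=-1$, $a=1/4$, $b=\ell_\gamma^2/4$ — the termwise interchange justified by Tonelli, the series converging by the prime geodesic theorem together with the exponential decay of $K_1$ — and then $\Gamma(-1/2)=-2\sqrt\pi$, $K_{-1}=K_1$, $\ell_\gamma=n\ell_{\gamma_0}$, one gets exactly $-\tfrac1{4\pi}\sum_{\{\mathcal P\}_p}\sum_{n\ge1}\tfrac1n\csch(\tfrac{n\ell_\gamma}{2})K_1(\tfrac{n\ell_\gamma}{2})$, the third line.

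\emph{Identity and elliptic terms.} The key input is the pair of evaluations $\int_0^\infty e^{-pr}\sqrt{\tfrac14+r^2}\,dr=\tfrac{\pi}{4p}\pmbK_1(\tfrac p2)$ and $\int_0^\infty re^{-pr}\sqrt{\tfrac14+r^2}\,dr=\tfrac{\pi}{8p}\pmbK_2(\tfrac p2)-\tfrac1{24}$, which I would derive from the classical integral $\int_0^\infty e^{-pr}(r^2+a^2)^{-1/2}\,dr=\tfrac\pi2\pmbK_0(ap)$ together with the differentiation and recurrence formulas for the Struve functions of Definition~\ref{def:K_pmbK} (notably $\pmbK_1'(z)=\pmbK_0(z)-z^{-1}\pmbK_1(z)$ and $\pmbK_0(z)+\pmbK_2(z)=\tfrac2z\pmbK_1(z)+\tfrac{2z}{3\pi}$). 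One then expands $\tfrac{\cosh((\pi-2\pi\ell/m)r)}{\cosh(\pi r)}=\sum_{k\ge0}(-1)^k(e^{-2\pi(k+\ell/m)r}+e^{-2\pi(k+1-\ell/m)r})$ and $r\tanh(\pi r)=r+2\sum_{k\ge1}(-1)^k r e^{-2\pi kr}$, integrates against $\sqrt{1/4+r^2}$ termwise (using also $\fp\!\int_0^\infty r\sqrt{1/4+r^2}\,dr=-\tfrac1{24}$), and checks that the stray rational constants cancel; for the elliptic term the two $\ell$-families coincide under $\ell\mapsto m_{\mathcal R}-\ell$, leaving a single alternating series in $\pmbK_1(\pi(k+\ell/m_{\mathcal R}))/(k+\ell/m_{\mathcal R})$, while the identity term becomes an alternating series in $\pmbK_2(\pi j)/j$. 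These bare series converge only conditionally (the relevant Struve functions tend to nonzero constants), so I would finish by applying Euler's series transformation $\sum_{k\ge0}(-1)^k a_k=\sum_{n\ge0}2^{-n-1}\sum_{k=0}^n(-1)^k\binom nk a_k$ for the elliptic term and its first derived form $\sum_{k\ge0}(-1)^k(k+1)a_k=\sum_{n\ge0}(n+1)2^{-n-2}\sum_{k=0}^n(-1)^k\binom nk a_k$ for the identity term; the coefficients $2^{-n-1}$ and $(n+1)2^{-n-6}$, the arguments $\pi(1+k)$, and the exponents $(1+k)^2$ of the statement then fall out from $\sum_{n\ge k}\binom nk2^{-n}=2$ and $\sum_{n\ge k}(n+1)\binom nk2^{-n}=4(k+1)$.

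\emph{Main obstacle.} Two points, routine-looking but genuinely delicate: (i) the finite-part bookkeeping — tracking the finite parts of the divergent $r$-integrals so that all rational constants cancel and the final answer carries no leftover polynomial term in $\vol(\Gamma\backslash\HH)$; and (ii) proving that the Euler-accelerated series converge exponentially fast, which amounts to estimating the iterated finite differences of $k\mapsto\pmbK_\nu(\pi(k+c))/(k+c)$ and ultimately rests on complete-monotonicity-type properties of the Struve functions of the second kind.
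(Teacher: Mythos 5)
Your skeleton --- heat trace, Selberg trace formula applied to $h_t(r)=e^{-t(1/4+r^2)}$, Mellin transform, meromorphic continuation with $s=-1/2$ regular --- is exactly the paper's starting point (it cites Williams and Randol for precisely this), and your hyperbolic computation via $\int_0^\infty t^{\nu-1}e^{-at-b/t}dt=2(b/a)^{\nu/2}K_\nu(2\sqrt{ab})$ matches the paper's third line. The identity and elliptic terms are where you take a genuinely different route. The paper works from Randol's integrated-by-parts form, so the identity contribution appears as $\frac{\vol(\Gamma\backslash\HH)}{8(s-1)}\int_\R(\tfrac14+r^2)^{1-s}\sech^2(\pi r)\,dr$ and the elliptic one as $\int_\R\frac{e^{-2\pi\ell r/m_{\mathcal R}}}{1+e^{-2\pi r}}(\tfrac14+r^2)^{-s}dr$; both integrals converge absolutely at $s=-1/2$, so no finite parts are ever needed. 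It then substitutes $t=e^{-y}$ and expands $1/(1+x)$, resp.\ $1/(1+x)^2$, binomially about $x=1$ \emph{inside the integrand}, where the expansion converges uniformly on $[0,1]$; termwise integration via \cite[(11.5.2)]{NIST} produces the Struve double series directly, and the pointwise remainder $|1-x|^N/(2^N(1+x))\le 2^{-N}$ delivers the exponential convergence and an explicit error bound for free. Your version --- expand $r\tanh(\pi r)$ and $\cosh(\beta r)/\cosh(\pi r)$ into exponentials, integrate each term in closed form, then Euler-accelerate a posteriori --- is consistent with this (I checked your evaluations $\int_0^\infty e^{-pr}\sqrt{\tfrac14+r^2}\,dr=\tfrac{\pi}{4p}\pmbK_1(\tfrac p2)$ and $\int_0^\infty re^{-pr}\sqrt{\tfrac14+r^2}\,dr=\tfrac{\pi}{8p}\pmbK_2(\tfrac p2)-\tfrac1{24}$, the Struve recurrences, and the binomial sums $\sum_{n\ge k}\binom nk2^{-n}=2$, $\sum_{n\ge k}(n+1)\binom nk2^{-n}=4(k+1)$; your coefficients do collapse to the stated ones), but it makes both of your flagged obstacles strictly harder than they are in the paper.

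There is one concrete gap as written: your intermediate identity-term series is not conditionally convergent, it is divergent. Since $\pmbK_2(z)\sim 2z/(3\pi)$ as $z\to\infty$, the terms of $\sum_{k}(-1)^k\pmbK_2(\pi(1+k))/(1+k)$ tend to $\pm\tfrac23$, so ``integrate termwise, then apply the derived Euler transform'' is not literally valid. Termwise integration legitimately produces only the grouped, absolutely convergent series $\sum_k(-1)^k\bigl(\tfrac{1}{16k}\pmbK_2(\pi k)-\tfrac1{24}\bigr)$; splitting off the constants against $\fp\int_0^\infty r\sqrt{\tfrac14+r^2}\,dr=-\tfrac1{24}$ amounts to assigning $\sum_k(-1)^k$ its Abel value and then asserting that the Euler resummation of the split, divergent series agrees with the analytic continuation in $s$. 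That is true and fixable (keep the brackets grouped and transform $b_k=\tfrac{1}{16k}\pmbK_2(\pi k)-\tfrac1{24}$, or insert a convergence factor $x^k$ and pass to $x\to1^-$, or simply derive everything at $\Rea(s)>1$ and continue), but it is a missing step, and it is precisely the step the paper's integrand-level expansion avoids. The elliptic series, by contrast, really is conditionally convergent ($\pmbK_1\to2/\pi$, so its terms are $O(1/k)$), and there your argument goes through once the termwise integration is justified by Abel summation. Finally, the exponential convergence of the accelerated series does not require any monotonicity properties of $\pmbK_\nu$: done the paper's way it is an immediate consequence of the $2^{-N}$ bound on the integrand.
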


The first two lines of the right hand side converge exponentially fast. Moreover, it is possible to evaluate the Struve functions with the help of systems of computer algebra with an arbitrary precision.  Consequently, this form is extremely convenient for calculations.

It is the presence of conical singularities, corresponding to the elliptic elements of the group, that allows for the possibility of \em positive \em Casimir energy, corresponding to a \em repulsive force. \em  Without these elements the Casimir energy is always strictly negative. We show that under a natural assumption on the lengths of closed geodesics, an assumption that is known to hold asymptotically, the Casimir energy may be \em positive. \em
The groups of which we are aware that may give rise to orbifolds with positive Casimir energy are so-called triangle groups. Fix $p,q,r \in \N$ with $\tfrac{1}{p}+\tfrac{1}{q}+\tfrac{1}{r}<1$. Define a $(p,q,r)$\textit{-triangle group} as in \cite[Definition 10.6.3]{Beardon} and denote it by $\Gamma(p,q,r)$. For such values of $p,q,r$, the group $\Gamma(p,q,r)$ is a discrete co-compact subgroup of $\PSL(2, \R)$. The area of $\Gamma(p,q,r) \backslash \mathbb{H}$ is equal to \cite[p. 280]{Beardon}
\begin{equation}\label{eq:volume}
2 \pi \left( 1- (\tfrac{1}{p} + \tfrac{1}{q}+\tfrac{1}{r})\right).	
\end{equation}
There has been a significant amount of research dedicated to 
triangle groups  \cite{suzzi2016figure, marmolejo2020growth, philippe2008groupes, philippe2010rigidite, philippe2009spectre}.

One of the most significant triangle groups is the $(2,3,7)$-triangle group, $\Delta(2,3,7)$.   It is related to a special type  of   \em   surfaces, \em named after Adolf Hurwitz.  A Hurwitz surface, is a compact Riemann surface with precisely 
${84(g-1)}$ automorphisms, where $g$ is the genus of the surface.  This number is maximal by virtue of Hurwitz's theorem on automorphisms \cite{hurwitz1892}.   This   group of automorphisms is called a Hurwitz group.  By uniformization, a Hurwitz surface admits a hyperbolic structure wherein the automorphisms act by isometries.  Such isometries descend from the $(2,3,7)$-triangle group acting on the universal cover $\mathbb H$.

  Here, we aim to show that under a natural assumption on the closed geodesics of the 
  $(2,3,7)$-triangle group orbifold, which is known to hold asymptotically, the Casimir energy is positive.    

\begin{conjecture} \label{conj_plus}
Under the assumption \eqref{eq:equality_for_ell_n}, the Casimir energy of the  $(2,3,7)$-triangle group orbifold $\Delta(2,3,7) \setminus \mathbb{H}$ is bigger or equal than 0.01.
\end{conjecture}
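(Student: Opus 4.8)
The plan is to specialize Theorem~\ref{th:energy} to $\Gamma=\Delta(2,3,7)$ and then certify that the resulting number exceeds $0.01$. By \eqref{eq:volume} the orbifold has area $\vol(\Delta(2,3,7)\backslash\mathbb H)=2\pi\bigl(1-\tfrac12-\tfrac13-\tfrac17\bigr)=\tfrac\pi{21}$, so the area term carries the prefactor $\tfrac1{21}$, and the only primitive elliptic conjugacy classes are the three cone points, of orders $m_{\mathcal R}\in\{2,3,7\}$; hence the middle sum of Theorem~\ref{th:energy} is a \emph{finite} sum of series. Write
\[
\zeta_\Gamma(-1/2)=A+\mathcal E-\mathcal H,\qquad \mathcal H:=\tfrac1{4\pi}\sum_{\{\mathcal P\}_p}\sum_{n=1}^\infty\tfrac1n\csch(\tfrac{n\ell_\gamma}{2})\,K_1(\tfrac{n\ell_\gamma}{2})>0,
\]
with $A$ the area term and $\mathcal E$ the sum over $\{\mathcal R\}_p$ (the term into which any positive contribution must enter, cf.\ the discussion preceding the conjecture). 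The goal becomes $A+\mathcal E\ge\mathcal H+0.01$. Two features make this feasible: $|A|$ is tiny, of order $10^{-3}$, because of the combined factors $\tfrac1{21}$ and $2^{-(n+6)}$; and, by the remark following Theorem~\ref{th:energy}, both $A$ and $\mathcal E$ converge exponentially fast and are therefore rigorously computable.

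\emph{Lower bound for $A+\mathcal E$.} I would truncate each inner $n$-series at some $N$ and bound the tail by a geometric series: the binomial weights satisfy $\sum_k\binom nk2^{-n}\le1$, the Struve values obey $\pmbK_1(z)=O(1)$ and $\pmbK_2(z)=O(z)$ as $z\to\infty$ (with $\pmbK_1$ large near $0$) by \cite{watson,NIST}, and the alternating $k$-sums supply the cancellation --- an $n$-th finite difference --- responsible for the exponential decay. The finitely many retained terms are then enclosed by interval arithmetic from certified bounds for $\pmbK_1,\pmbK_2$ at the arguments $\pi(1+k)$ and $(k+\tfrac{\ell}{m_{\mathcal R}})\pi$. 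The $m_{\mathcal R}=7$ cone point, and within it the $\ell=1$ summand (where $\pmbK_1$ is evaluated at the small arguments $\tfrac\pi7,\tfrac{2\pi}7,\dots$ and is large), contributes the bulk of the positive part. This yields $A+\mathcal E\ge c_0$ for an explicit constant $c_0$.

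\emph{Upper bound for $\mathcal H$.} The function $x\mapsto\csch(x)K_1(x)$ is positive and strictly decreasing on $(0,\infty)$, with $\csch(x)K_1(x)\le\sqrt{2\pi/x}\,e^{-2x}\bigl(1+O(1/x)\bigr)$, so one fixes an explicit elementary majorant $\csch(x)K_1(x)\le g(x)$ valid for all $x>0$ and obtains $\mathcal H\le\tfrac1{4\pi}\sum_{\{\mathcal P\}_p}\sum_{n\ge1}\tfrac1n\,g(\tfrac{n\ell_\gamma}{2})$. I would split the outer sum at a threshold $L_*$. The head $\ell_\gamma\le L_*$ is a finite sum whose terms are given by \eqref{eq:equality_for_ell_n} --- the dominant one being that of the systole --- and is evaluated directly; the tail $\ell_\gamma>L_*$ is controlled by the counting function $N(x)=\#\{\gamma\in\{\mathcal P\}_p:\ell_\gamma\le x\}$ furnished by \eqref{eq:equality_for_ell_n}, and an Abel-summation argument against $g$ shows it is $o(1)$ as $L_*\to\infty$ (concretely $\sum_\gamma\ell_\gamma^{-1/2}e^{-\ell_\gamma}$ converges, since $dN(x)\approx(e^x/x)\,dx$ makes the integrand $\approx x^{-3/2}$). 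Taking $L_*$ large makes the tail below $10^{-3}$, so $\mathcal H\le c_1$ for an explicit $c_1$, and the conjecture follows provided $c_0-c_1\ge0.01$.

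\emph{Main obstacle.} Everything hinges on winning the head-to-head comparison $c_0-c_1\ge0.01$ with \emph{rigorous} constants. The routine ingredients are the elementary majorant $g$ and the Abel-summation tail estimate. The delicate ones are (i) evaluating $\mathcal E$ --- especially the $m_{\mathcal R}=7$, $\ell=1$ series, where $\pmbK_1$ is large and a slowly decaying finite difference is damped only by $2^{-n}$ --- to enough digits with certified error bars; and (ii) extracting from \eqref{eq:equality_for_ell_n} not merely the convergence of the geodesic tail but a genuinely \emph{sharp} bound on the short-geodesic part of $\mathcal H$, since the systole alone already accounts for a non-negligible fraction of $\mathcal H$. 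Should the certified values of $\mathcal E$ and $\mathcal H$ turn out numerically close, this comparison is the crux, to be settled by increasing the interval-arithmetic precision.
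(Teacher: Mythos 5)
Your decomposition into identity, elliptic, and hyperbolic contributions, and your treatment of the first two (exponentially convergent series, certified truncation error), is exactly the paper's strategy. The gap is in the hyperbolic term $\mathcal H$, and it is twofold. First, you write that the head $\ell_\gamma\le L_*$ ``is a finite sum whose terms are given by \eqref{eq:equality_for_ell_n}.'' It is not: \eqref{eq:equality_for_ell_n} is only a one-sided \emph{lower} bound on $\ell_j$, assumed for $j\ge 51$ (and checked for $j<51$), and since $x\mapsto\csch(x)K_1(x)$ is decreasing, a lower bound on lengths gives an \emph{upper} bound on $\mathcal H$ that is hopelessly lossy for small $j$ (for $j=1$ the bound is not even positive, and for $j=2$ it would replace the systole $\ell_1\approx 0.984$ by $0.33$, inflating that single term from $-0.289$ to order $-3$). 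The head must be evaluated from the \emph{actual} length spectrum; the paper imports this from Vogeler's explicit enumeration of the first $50$ primitive geodesics of $\Delta(2,3,7)\backslash\mathbb H$ as words in $R,L$ \cite{Vogeler}, together with a careful accounting of the multiplicities $s(\gamma)$ among $\{\gamma\},\{\gamma^{-1}\},\{\gamma^*\},\{(\gamma^*)^{-1}\}$ (Lemma~\ref{lemma:hyperbolic_contribution}, Table~\ref{tab:mytable}). Your proposal never identifies a source for this data, and without it the head of $\mathcal H$ cannot be computed.

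Second, your budget for the tail is off by roughly two orders of magnitude. You claim that ``taking $L_*$ large makes the tail below $10^{-3}$,'' but the head must be \emph{enumerated}, and the number of primitive geodesics of length $\le L$ grows like $e^L/L$; with the $50$ geodesics actually available one has $L_*\approx 5.46$, and the best the assumption \eqref{eq:equality_for_ell_n} then yields for the remaining sum is about $0.294$ (Lemma~\ref{lemma:we_hope_it_holds}: $0.138$ from $51\le j\le 10^7$, $0.155$ from $j>10^7$, plus a tiny $n\ge2$ piece). Since the elliptic term is $0.8757$, the head of $\mathcal H$ is $0.5681$, and the identity term is about $-0.0021$, the final margin is only $0.0116$ --- the tail is not a negligible correction but consumes almost all of the positive surplus. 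Your Abel-summation framework for the tail is fine in principle (and is essentially equivalent to the paper's direct comparison of $\sum_j \csch(\tfrac{\log j+\log\log j}{2})K_1(\tfrac{\log j+\log\log j}{2})$ with an integral), but the proof only closes because the head is computed exactly from Vogeler's data and the tail constant is pinned down to $0.294$, not because the tail can be driven to $10^{-3}$.
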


With the standard sign convention, negative Casimir energy physically represents an attracting force, whereas positive Casimir energy physically represents a repelling force \cite{asorey}.  In Lemma \ref{lemma:elliptic_contribution} we show that the first term in the expression for the Casimir energy, $\zeta_\Gamma(-1/2)$ given in Theorem \ref{th:energy} is strictly negative.  It is also apparent that the last term is strictly negative.  The middle term is the contribution of the elliptic elements.  This shows that the Casimir energy is always negative for smooth compact hyperbolic surfaces without conical singularities since they have no elliptic elements.  Moreover, in the case of the $(2,3.7)$-orbifold surface it shows that the presence of conical singularities has a profound effect, to the extent that their contribution to the energy is the dominant term.  We red that for many, perhaps even most, surfaces obtained as a quotient by a $(p,q,r)$ triangle group, the Casimir energy is positive, but we postpone that investigation to future work.  


\subsection{Numerics}
Some of the calculations in this paper were performed  with the help of PARI/GP \cite{PARI2} using a multiple-precision arithmetic with the precision of 500 significant digits. To be more precise, we used it in the proof of Lemma~\ref{le:elliptic}, \eqref{eq:final1} and Table \ref{tab:mytable}. The code is available upon request.

\subsection{Organization} \label{ss:o}
In \S \ref{sec:preliminaries} we recall basic properties of triangle groups and the spectral zeta function, the Selberg trace formula, and standard notation.  We continue in \S \ref{s:elliptic} with the calculation of the orbital integrals arising from the elliptic elements.  One interesting observation that follows from Lemma~\ref{le:growth_lemma} is that as the angle of the elliptic element tends to zero, the contribution to the Casimir energy is positive and tends to infinity on the order of $\theta^{-2}$ for an angle of measure $\theta$.  We then calculate to six significant figures the elliptic contribution to the Casimir energy of the $(2,3,7)$-triangle group orbifold.  In \S 4 we calculate the identity contribution in general and demonstrate an estimate for the $(2,3,7)$-triangle group orbifold in particular.  In \S \ref{s:hyperbolic} we consider the hyperbolic contribution to the Casimir energy in general and then specialize to the case of the $(2,3,7)$-triangle surface.  We follow~\cite{Vogeler} to calculate to six significant figures the contribution from the first 50 primitive closed geodesics.  Next, under assumption \ref{eq:equality_for_ell_n} on the remaining geodesic lengths, we estimate the contribution of all but the first 50 primitive closed geodesics.  We conclude this section with a proof of Conjecture~\ref{conj_plus} under this assumption, noting that the assumption holds asymptotically.  In \S \ref{s:conclude} we conclude with implications and further directions.  

\section*{Acknowledgements} 
JR is grateful to Peter Sarnak for inspiring discussions and to Roger Vogeler for elucidating correspondence.  GZ's research is partially supported
by the Swedish Research Council (VR). KF is partially funded by the Deutsche Forschungsgemeinschaft (DFG, German Research Foundation) under Germany's Excellence Strategy EXC 2044-390685587, Mathematics M\"unster: Dynamics-Geometry-Structure.

\section{Preliminaries}\label{sec:preliminaries}
Here we recall additional facts about the geometry of compact hyperbolic surfaces. In Sections \ref{sec:STF} and \ref{sec:SZF}, we discuss the Selberg trace formula, sketch the proof of the meromorphic continuation of $\zeta_\Gamma(s)$ and obtain the elliptic orbital integrals.



\subsection{Selberg trace formula}\label{sec:STF}
As in \eqref{ineq:eigenvalues}, we let $\{ \lambda_n\}_{n \in \N_0}$ be the eigenvalues of the Laplace operator acting on $X$. 
It is convenient to introduce a sequence of numbers $r_n$ such that the eigenvalues 
\beq \lambda_n = 1/4+r_n^2, \textrm{ for } n \in \N_0. \label{eq:r_n} \eeq 
Then $r_0$ must be $\pm \frac i 2$, and $r_n$ is real for $n \geq 1$. To state the Selberg trace formula as in \cite{Hejhal1}, assume that the function $r \mapsto h(r)$ is analytic on $|\Ima(r)| \leq \frac 1 2 + \delta$ for some $\delta > 0$.  Assume further that $h$ is even, that is $h(-r) = h(r)$, and that $h$ satisfies an estimate $|h(r)| \leq M (1+\Rea(r))^{-2-\delta}$ for a constant~$M$.  We define the Fourier transform of $h$ to be 
\beq g(u) = \frac{1}{2\pi} \int_\R h(r) e^{-iru} dr. \label{eq:fouriertransform} \eeq 
Then, with this setup, the Selberg trace formula is the following identity \cite[p. 351-352]{Hejhal1}
\begin{align}
 \sum_{n \geq 0} h(r_n)=
 & \frac{ \vol(\Gamma \backslash \HH)}{4\pi} \int_\R r h(r) \tanh(\pi r) dr \nonumber \\
&+  \sum_{\{ \mathcal{P} \}} \frac{ \log NP_0}{NP^{1/2} - NP^{-1/2}} g(\log(NP)) \nonumber \\
&+\sum_{ \{ \mathcal{R} \}_p } \sum_{\ell=1}^{m_\mathcal{R}-1} \frac{1}{2 m_\mathcal{R} \sin (\tfrac{\pi \ell}{m_\mathcal{R}})} \int_\R \frac{e^{-2r \tfrac{\pi \ell}{m_\mathcal{R}}}}{1+e^{-2\pi r}} h(r) dr. \label{eq:Selberg_trace_formula}
\end{align}
The sums and integrals in the above expression are all absolutely convergent.  We note that if one compares the above identity to \cite[p. 351-352]{Hejhal1}, the representation we have here is the trivial representation, so the traces appearing in \cite{Hejhal1} are all equal to one.\footnote{The Selberg trace formula has unfortunately appeared incorrectly in the literature in at least two occasions of which we are aware.  We have taken care to verify that this is the correct expression as in \cite{Hejhal1}. In \cite{Hejhal1}, the author uses  the negative of our Laplace operator, but that does not change the values of $r_n$.}  



\subsection{Spectral zeta function}\label{sec:SZF}
In \cite[(6.10) and (6.11)]{FloydCasimir} and \cite[(3)]{MR369286}, the respective  authors study the meromorphic continution of $\zeta_\Gamma(s)$ to $s \in \C$. To avoid the zero in the denominator of the first summand, corresponding to the eigenvalue $\lambda_0 = 0$, they choose $\varepsilon>0$ and introduce
\begin{equation}\label{eq:zeta_via_Mellin_transform}
\zeta_{\Gamma,\varepsilon}(s) = \sum_{n \in \N_0} \frac{1}{(\lambda_n+\varepsilon)^s} = \frac{1}{\Gamma(s)} \int_{0}^{\infty} t^{s-1} \sum_{n \in \N_0} e^{-t ( \lambda_n  + \varepsilon)} dt.
\end{equation}
As the next step, they consider the function 
\[ h(r) = e^{-t (r^2 + 1/4+\varepsilon)}\] and apply the Selberg trace formula, \eqref{eq:Selberg_trace_formula}, to this function to express 
\[ \sum_{n \in \N_0} e^{-t ( \lambda_n  + \varepsilon)}\] in terms of geometric data of $X$. 
The following step is to substitute aforementioned sum into \eqref{eq:zeta_via_Mellin_transform} to  obtain $\zeta_{\Gamma,\varepsilon}(s)$. 
Finally, $\zeta_\Gamma(s)$ is obtained from a limiting procedure by letting  $\varepsilon$ go to $0$. Repeating their proof with the only modification that now we have to take elliptic elements into consideration, we obtain  for $\Rea(s) < 0$,
\begin{align} \label{eq:first_line_in_zeta}
  \zeta_\Gamma(s) 	& =  
  \frac{\vol(\Gamma \backslash \mathbb{H})}{8(s-1)} \int_\R (\tfrac{1}{4}+r^2)^{1-s} \sech^2 (\pi r) dr  \nonumber \\
	& + \frac{(4 \pi)^{-1/2}}{\Gamma(s)}  \sum_{  \{ \mathcal{P} \}_p} \sum_{n=1}^\infty ( \ell_\gamma / n)^{1/2} \csch(\tfrac{n \ell_\gamma}{2}) (n \ell_\gamma)^s K_{1/2-s} (\tfrac{n \ell_\gamma}{2}) \nonumber \\
	& + \sum_{ \{ \mathcal{R} \}_p } \sum_{\ell=1}^{m_\mathcal{R}-1} \frac{1}{2 m_\mathcal{R} \sin (\tfrac{\pi \ell}{m_\mathcal{R}})} \int_\R \frac{e^{-2r \tfrac{\pi \ell}{m_\mathcal{R}}}}{1+e^{-2\pi r}} (\tfrac{1}{4}+r^2)^{-s} dr.
\end{align}

One can obtain the same result \textit{formally}, that is, non-rigorously, by taking $h(r) = (1/4+r^2)^{-s}$; of course, in that case $h$ does not satisfy the growth condition for ${\Rea(s)<0}$. For such $h$, the left hand side of the Selberg trace formula,~\eqref{eq:Selberg_trace_formula}, \text{formally} coincides with the spectral zeta function, as each summand reads $h(r_n) = (1/4+r_n^2)^{-s} = \lambda_n^{-s}$.  Although this may be a useful heuristic, the derivation following \cite[(6.10) and (6.11)]{FloydCasimir} and \cite[(3)]{MR369286} is \em fully rigorous.  \em 

\subsection*{Notation}
We recall the following notation: 
\begin{itemize}
	\item $f \le_{a, b, c, ...} g$ means $\exists C >0$ that depends only on the (finitely many) parameters $a, b, c, \ldots$ such that $f \leq C g$,
	\item $f \lesssim g$ means $\exists C$ (independent of any parameters) such that $f \leq C g$, 
 \item a function $f(x)$ is $\mathcal O (g(x))$ as $x \to 0$ if there exist $C, \varepsilon > 0$ such that $|f(x)| \leq C |g(x)|$ for all $x \in (0, \varepsilon)$, 
	\item $\Gamma(\cdot)$ is the Gamma function, $\Gamma(\cdot, \cdot)$ is the (upper) incomplete Gamma function.
\end{itemize}

\section{Elliptic contribution} \label{s:elliptic}
In this section, we demonstrate an identity that we use to obtain an 
expression for the contribution of elliptic elements to the spectral zeta function in terms of special functions.  This identity is  of independent interest as it may be useful for other calculations due to its rapid convergence.  Here, we use it to evaluate the contribution of the elliptic elements in $\Delta(2,3,7) \backslash \mathbb{H}$ to its Casimir energy.  

\begin{lemma}\label{lemma:elliptic_contribution}
Let 
     $C > 0$, $D \ge 0$, and $s \in \C$. 
Then 	\begin{align}\label{eq:expression_of_an_easy_integral_in_terms_of_struveH}
		&\int_{0}^{\infty}  \frac{e^{- C y}}{e^{-D  y} + 1} (1+y^2)^{-s} dy \nonumber \\
		& = \sqrt{\pi } 2^{{-s} -1/2} \Gamma \left(1-s\right) \sum _{n=0}^{\infty} 2^{-n-1}  \sum _{k=0}^n    \binom{n}{k}   \frac{ (-1)^k  \pmb{K}_{{-s}+1/2}(C+D k)}{(C+D k)^{{-s}+1/2}}.
	\end{align}
	Above, $\pmb{K}_{-s+1/2}$ is the Struve function of the second kind. In particular, for $s=-1/2$, the right hand side of  \eqref{eq:expression_of_an_easy_integral_in_terms_of_struveH} becomes 
	\[
	\pi  \sum _{n=0}^{\infty} 2^{-n-2}  \sum _{k=0}^n    \binom{n}{k}   \frac{ (-1)^k  \pmb{K}_{1}(C+D k)}{C +D k}.
	\]
 
The series converges exponentially fast; more precisely, the 
absolute value of the difference between the left hand side of \eqref{eq:expression_of_an_easy_integral_in_terms_of_struveH} and the right hand side, restricted to $n \in \{0, N \}$, is bounded by
\begin{equation}\label{eq:error_in_evaluation}
 \frac{1}{2^N} \int_0^1 \frac{(1+\log(t)^2)^{-\Rea(s)}  t^{C-1}}{(t^D + 1 )} dt.   
\end{equation}
 For $s=-1/2$ in particular, we have the following bound:
 \begin{equation}\label{eq:bound_for_12}
     \frac{1}{2^N}\frac{\pi  C  \pmb{K}_1(C)+4}{2 C^2}.
 \end{equation}

\end{lemma}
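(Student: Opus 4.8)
The plan is to expand the Fermi-type factor $(e^{-Dy}+1)^{-1}$ as a series that converges \emph{uniformly} on $[0,\infty)$, integrate term by term against $e^{-Cy}(1+y^2)^{-s}$, recognise each resulting elementary Laplace integral $\int_0^\infty e^{-(C+Dk)y}(1+y^2)^{-s}\,dy$ as a Struve function of the second kind via a classical integral representation, and finally read off the error bounds by estimating the tail of the expansion directly.

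The expansion I would use is not the naive geometric series $\tfrac1{1+x}=\sum_n(-1)^nx^n$ (which diverges at $y=0$, where $x=e^{-Dy}=1$) but the reshuffled one
\[
\frac{1}{1+x}=\frac12\sum_{n=0}^\infty\Big(\frac{1-x}{2}\Big)^n,\qquad x=e^{-Dy}.
\]
For $y\ge 0$ and $D\ge 0$ one has $0\le\tfrac{1-e^{-Dy}}{2}\le\tfrac12$, so this series converges and its partial sums lie in $[0,1]$; since $C>0$ the function $e^{-Cy}(1+y^2)^{-\Rea s}$ is integrable on $(0,\infty)$, and the uniform bound on the partial sums lets us interchange sum and integral (dominated convergence, or Fubini--Tonelli applied to the nonnegative series). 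Expanding $(1-e^{-Dy})^n$ by the binomial theorem turns the left-hand side of \eqref{eq:expression_of_an_easy_integral_in_terms_of_struveH} into
\[
\sum_{n=0}^\infty\frac1{2^{n+1}}\sum_{k=0}^n(-1)^k\binom nk\int_0^\infty e^{-(C+Dk)y}(1+y^2)^{-s}\,dy .
\]

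Next I would invoke the integral representation of the Struve function of the second kind,
\[
\pmbK_\nu(z)=\frac{2(z/2)^\nu}{\sqrt\pi\,\Gamma(\nu+\tfrac12)}\int_0^\infty e^{-zt}(1+t^2)^{\nu-1/2}\,dt,
\]
valid for $\Rea z>0$ and $\Rea\nu>-\tfrac12$ (see \cite[\S 10.4]{watson}, \cite[\S 11.5]{NIST}). With $\nu=\tfrac12-s$ (so that $\nu-\tfrac12=-s$ and $\Gamma(\nu+\tfrac12)=\Gamma(1-s)$; this requires $\Rea s<1$) and $z=C+Dk>0$ it gives
\[
\int_0^\infty e^{-(C+Dk)y}(1+y^2)^{-s}\,dy=\sqrt\pi\,2^{-s-1/2}\,\Gamma(1-s)\,\frac{\pmbK_{-s+1/2}(C+Dk)}{(C+Dk)^{-s+1/2}},
\]
and substituting this into the previous display is exactly \eqref{eq:expression_of_an_easy_integral_in_terms_of_struveH}. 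The restriction $\Rea s<1$ is harmless, since both sides are meromorphic in $s$; in particular the identity holds at $s=-\tfrac12$, where $2^{-s-1/2}=1$, $\Gamma(1-s)=\Gamma(\tfrac32)=\tfrac{\sqrt\pi}{2}$, and $-s+\tfrac12=1$, yielding the stated closed form with $\pmbK_1$ and overall constant $\pi$.

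For the error estimates I would truncate the outer sum: the remainder is the integral of $e^{-Cy}(1+y^2)^{-s}$ against the geometric tail
\[
\sum_{n\ge N}\frac1{2^{n+1}}(1-e^{-Dy})^n=\frac{1}{1+e^{-Dy}}\Big(\frac{1-e^{-Dy}}{2}\Big)^{N}\le\frac{1}{2^{N}}\cdot\frac{1}{1+e^{-Dy}} .
\]
Taking absolute values inside the integral and substituting $t=e^{-y}$ (so that $e^{-Cy}=t^C$, $e^{-Dy}=t^D$, $1+y^2=1+\log(t)^2$, and $dy=-dt/t$) transforms this into $2^{-N}\int_0^1\frac{(1+\log(t)^2)^{-\Rea s}\,t^{C-1}}{t^D+1}\,dt$, which is \eqref{eq:error_in_evaluation}. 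For $s=-\tfrac12$ one further bounds $\tfrac1{t^D+1}\le 1$ and evaluates $\int_0^1 t^{C-1}(1+\log(t)^2)^{1/2}\,dt=\int_0^\infty e^{-Cy}(1+y^2)^{1/2}\,dy=\frac{\pi\,\pmbK_1(C)}{2C}$ (the $\nu=1$ instance of the same representation); since $\frac{\pi\pmbK_1(C)}{2C}\le\frac{\pi C\,\pmbK_1(C)+4}{2C^2}$, this yields \eqref{eq:bound_for_12}.

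The main obstacle is the term-by-term integration: one must avoid the divergent geometric expansion of $(1+e^{-Dy})^{-1}$ and instead use the series in powers of $\tfrac{1-e^{-Dy}}2$, which is the one reason Fubini applies uniformly in $C$ and $D$; relatedly, a little care is needed to use the integral representation of the Struve function of the \emph{second} kind (and to track its range $\Rea\nu>-\tfrac12$, i.e.\ $\Rea s<1$) rather than that of the first kind. Everything else --- the binomial expansion, the change of variables $t=e^{-y}$, and the substitutions at $s=-\tfrac12$ --- is routine.
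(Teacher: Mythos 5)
Your proof is correct and follows essentially the same route as the paper's: the same rearranged expansion $\tfrac{1}{1+x}=\sum_n(-1)^n(x-1)^n2^{-n-1}$ (with $x=e^{-Dy}$, equivalently $t^D$ after the substitution $t=e^{-y}$), the same interchange of sum and integral justified by integrability for $C>0$, the same appeal to the Laplace-type integral representation of $\pmb{K}_\nu$ from \cite[(11.5.2)]{NIST}, and the same closed form $\tfrac{(1-x)^N}{2^N(1+x)}$ for the geometric tail in the error estimate. The only (harmless) deviation is at the very end, where you evaluate $\int_0^1 t^{C-1}(1+\log^2 t)^{1/2}\,dt$ exactly as $\pi\pmb{K}_1(C)/(2C)$ and then relax it to the stated bound \eqref{eq:bound_for_12}, which is slightly more careful than the paper's direct assertion of that expression.
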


\begin{proof}
 We make the change of variables $t=e^{-y}$ and rewrite the integral:
	\begin{align}\label{eq:integral_easy_elliptic}
		\int_{0}^{\infty}  \frac{e^{- C y}}{e^{-D  y} + 1} (1+y^2)^{-s}  dy = \int_{0}^{1}  \frac{t^{C-1}}{t^{D} + 1} (1+\log(t)^2)^{-s} dt.
	\end{align}
	For $|x-1|<2$,  
	\begin{align}\label{eq:series_expansion_of_1/x+1}
		\frac{1}{x+1} = \sum_{n=0}^{\infty} (-1)^n (x-1)^n 2^{-n-1}   = \sum_{n=0}^{\infty} \sum _{k=0}^n (-1)^k 2^{-n-1} x^k \binom{n}{k}.
	\end{align}
	If $x = t^D \in [0,1]$, the series above converges uniformly.  
 Moreover, reversing the substitution, 
 \[ \int_0 ^\infty e^{-Cy} (1+y^2)^{-s} dy = \int_0 ^1 t^{C-1} (1+\log(t)^2)^{-s} dt.\]
 Since $C>0$, the $L^1$ norm of the function $x \mapsto x^{C-1}(1+\log^2(x))^{-s}$ is finite on $[0, 1]$.
 This allows us to substitute \eqref{eq:series_expansion_of_1/x+1} into \eqref{eq:integral_easy_elliptic} and exchange the summation and the integration to obtain that \eqref{eq:integral_easy_elliptic} is equal to 
	\[
	\sum_{n=0}^{\infty} 2^{-n-1} \sum _{k=0}^n (-1)^k  \binom{n}{k} \int_{0}^{1}  t^{C+ Dk-1} (1+\log(t)^2)^{{-s}}    dt.
	\]
	 As a consequence of \cite[(11.5.2)]{NIST},
the sum above is equal to 
\[
\sqrt{\pi } 2^{{-s} -1/2} \Gamma \left(1-s\right) \sum _{n=0}^{\infty} 2^{-n-1}  \sum _{k=0}^n    \binom{n}{k}   \frac{ (-1)^k  \pmb{K}_{{-s}+1/2}(C+D k)}{(C+D k)^{{-s}+1/2}},
\]
that concludes the proof.

	It remains  to prove that the convergence is exponentially fast. 
	Observe that 
	\[
	\frac{1}{x+1}	-\sum _{n=0}^{N-1}  (-1)^n (x-1)^n 2^{-n-1} =\frac{(1-x)^N}{2^N (x+1)}.
	\]
 
	We obtain that the absolute differences between the right and the left hand sides of \eqref{eq:expression_of_an_easy_integral_in_terms_of_struveH} is bounded from above by  
	\begin{align}\label{ineq:error_bound}
	&	\left| 	\int_{0}^{1}  \frac{(1-t^D)^N}{2^N (t^D +1)} t^{C-1} (1+\log(t)^2)^{-s} dt \right| \\ 
 & \leq \frac{1}{2^N} \int_0^1 \frac{(1+\log(t)^2)^{-\Rea(s)}  t^{C-1}}{(t^D + 1 )} dt. \nonumber
	\end{align} 
	The right hand side decays exponentially fast as $N \to \infty$ therewith proving the exponential convergence in  \eqref{eq:expression_of_an_easy_integral_in_terms_of_struveH}.	 
 We note that for $C>0$ and $D>0$, we can estimate the right hand side of \eqref{ineq:error_bound} by 
 \[
\frac{1}{2^N} \int_0^1 (1+\log(t)^2)^{1/2}  t^{C-1} dt = \frac{1}{2^N}\frac{\pi  C  \pmb{K}_1(C)+4}{2 C^2}.
 \]
\end{proof}

\begin{lemma}\label{eq:contribution_from_elliptic_elements_to_casimir_energy}
	The contribution of elliptic elements to the Casimir energy is equal to 
\begin{align*}
	\sum_{ \{ \mathcal{R} \}_p } & \sum_{\ell=1}^{m_\mathcal{R}-1} \frac{1}{8 m_{\mathcal{R}} \sin(\tfrac{\pi \ell}{m_\mathcal{R}})}   \sum _{n=0}^{\infty} \frac{1}{2^{n+1}}   \sum _{k=0}^n  (-1)^k  \binom{n}{k} 
	  \frac{   \pmb{K}_{1}\left[ (k+\tfrac{ \ell}{m_\mathcal{R}}) \pi \right]}
	{k+\tfrac{ \ell}{m_\mathcal{R}} }.
\end{align*}
\end{lemma}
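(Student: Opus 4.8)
The plan is to specialize the general expression \eqref{eq:first_line_in_zeta} for $\zeta_\Gamma(s)$ to $s=-1/2$, reduce the elliptic integral over $\R$ to two one-sided integrals on $(0,\infty)$, and then apply Lemma~\ref{lemma:elliptic_contribution} termwise.

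Setting $s=-1/2$ in the third line of \eqref{eq:first_line_in_zeta}, the elliptic contribution to the Casimir energy equals
\[
\sum_{\{\mathcal{R}\}_p}\sum_{\ell=1}^{m_\mathcal{R}-1}\frac{1}{2m_\mathcal{R}\sin(\pi\ell/m_\mathcal{R})}\int_\R \frac{e^{-2\pi(\ell/m_\mathcal{R})r}}{1+e^{-2\pi r}}\,(\tfrac14+r^2)^{1/2}\,dr .
\]
Each integral converges absolutely: as $r\to+\infty$ the integrand decays like $e^{-2\pi(\ell/m_\mathcal{R})r}$, and as $r\to-\infty$ like $e^{-2\pi(1-\ell/m_\mathcal{R})|r|}$, where $0<\ell/m_\mathcal{R}<1$ is used. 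So it is legitimate to manipulate it directly.

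I would then split the integral at $r=0$. On $(0,\infty)$ the change of variables $r=y/2$ gives $\tfrac14\int_0^\infty \tfrac{e^{-\pi(\ell/m_\mathcal{R})y}}{1+e^{-\pi y}}(1+y^2)^{1/2}\,dy$. On $(-\infty,0)$ the change of variables $r=-y/2$ together with $\tfrac{1}{1+e^{\pi y}}=\tfrac{e^{-\pi y}}{1+e^{-\pi y}}$ gives $\tfrac14\int_0^\infty \tfrac{e^{-\pi(1-\ell/m_\mathcal{R})y}}{1+e^{-\pi y}}(1+y^2)^{1/2}\,dy$, that is, the same integral with $\ell$ replaced by $m_\mathcal{R}-\ell$. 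Since $\sin(\pi\ell/m_\mathcal{R})=\sin(\pi(m_\mathcal{R}-\ell)/m_\mathcal{R})$ and $\ell\mapsto m_\mathcal{R}-\ell$ permutes $\{1,\dots,m_\mathcal{R}-1\}$, reindexing the contribution of the negative half-lines shows the elliptic contribution equals
\[
\sum_{\{\mathcal{R}\}_p}\sum_{\ell=1}^{m_\mathcal{R}-1}\frac{1}{4m_\mathcal{R}\sin(\pi\ell/m_\mathcal{R})}\int_0^\infty \frac{e^{-\pi(\ell/m_\mathcal{R})y}}{e^{-\pi y}+1}\,(1+y^2)^{1/2}\,dy .
\]

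Finally I would apply Lemma~\ref{lemma:elliptic_contribution} with $s=-1/2$, $C=\pi\ell/m_\mathcal{R}>0$, and $D=\pi\ge0$ to each remaining integral; its $s=-1/2$ form identifies the integral with $\pi\sum_{n\ge0}2^{-n-2}\sum_{k=0}^n\binom{n}{k}\frac{(-1)^k\pmb{K}_1(\pi(k+\ell/m_\mathcal{R}))}{\pi(k+\ell/m_\mathcal{R})}$. Cancelling $\pi$ against the denominator and combining $\tfrac{1}{4m_\mathcal{R}}\cdot 2^{-n-2}=\tfrac{1}{8m_\mathcal{R}}\cdot\tfrac{1}{2^{n+1}}$ produces exactly the asserted expression. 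The argument is essentially bookkeeping; the only points requiring care are the convergence of the $\R$-integral at $s=-1/2$ (so that the term-by-term reduction is justified), the rewriting $\tfrac{1}{1+e^{\pi y}}=\tfrac{e^{-\pi y}}{1+e^{-\pi y}}$ on the negative half-line, and keeping track of the powers of $2$; there is no substantive analytic obstacle beyond Lemma~\ref{lemma:elliptic_contribution}.
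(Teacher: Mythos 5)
Your proof is correct and follows essentially the same route as the paper: specialize \eqref{eq:first_line_in_zeta} at $s=-1/2$, reduce the integral over $\R$ to one over $(0,\infty)$, and apply Lemma~\ref{lemma:elliptic_contribution} with $C=\pi\ell/m_{\mathcal{R}}$ and $D=\pi$; the bookkeeping of the powers of $2$ checks out. Your explicit splitting at $r=0$ with the reindexing $\ell\mapsto m_{\mathcal{R}}-\ell$ is in fact more careful than the paper's own step \eqref{eq:integrals_transform}, which for an individual $\ell$ is only valid after summing over $\ell$ and using $\sin(\pi\ell/m_{\mathcal{R}})=\sin(\pi(m_{\mathcal{R}}-\ell)/m_{\mathcal{R}})$.
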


\begin{proof}
	We recall from \eqref{eq:first_line_in_zeta} that the contribution from elliptic elements to $\zeta_\Gamma(s)$ is  equal to
	\[
	\sum_{ \{ \mathcal{R} \}_p } \sum_{\ell=1}^{m_\mathcal{R}-1} \frac{1}{2 m_\mathcal{R} \sin (\tfrac{\pi \ell}{m_\mathcal{R}})} \int_\R \frac{e^{-2r \tfrac{\pi \ell}{m_\mathcal{R}}}}{1+e^{-2\pi r}} (\tfrac{1}{4}+r^2)^{-s} dr,
	\]
	where $\{ \mathcal{R} \}_p$ and $m_\mathcal{R}$ are defined as in \S \ref{sec:intro}. 
Making the substitution $t=2r$, the integral 
 \begin{equation}\label{eq:integrals_transform}
 \int_\R \frac{e^{-2r \tfrac{\pi \ell}{m_\mathcal{R}}}}{1+e^{-2\pi r}} (\tfrac{1}{4}+r^2)^{-s} dr = 4^s \int_0 ^\infty \frac{e^{-t \pi \ell /m_\cR}}{1+e^{-\pi t}} (1+t^2)^{-s} dt.      
 \end{equation}
Since $\ell< m_\cR$, we may apply Lemma \ref{lemma:elliptic_contribution} to conclude that this integral is equal to 
\[ 4^s \sqrt \pi 2^{-s-1/2} \Gamma(1-s) \sum_{n \geq 0} 2^{-n-1} \sum_{k=0} ^n (-1)^k {n \choose k} \frac{ \pmbK_{-s+1/2}(\pi \ell/m_\cR + \pi k)}{(\pi \ell/m_\cR + \pi k)^{-s+1/2}}.\]
Setting $s=-1/2$ we obtain 
\[ \frac 1 4 \sum_{n = 0} ^\infty 2^{-n-1} \sum_{k=0} ^n (-1)^k {n \choose k} \frac{ \pmbK_1(\pi \ell/m_\cR + \pi k)}{\ell/m_\cR + k}\]
that concludes the proof.

\end{proof}

\subsection{Elliptic elements in triangle groups}\label{sec_class_elements}
Let $\Gamma(p, q, r)$ be the $(p,q,r)$-triangle group. In
 \cite{philippe2011determination}, lengths, $\ell_1, \ell_2, \ell_3$, of the first three geodesics for  $q \ge p \ge r \ge 3$ are given as:
\begin{align*}
	\ell_1 &= 2 \arcosh \left( 2 \cos \tfrac{\pi}{r} \cos  \tfrac{\pi}{p} + \cos \tfrac{\pi}{q} \right),\\
	\ell_2 &= 2 \arcosh \left( 2 \cos \tfrac{\pi}{q} \cos  \tfrac{\pi}{r} + \cos \tfrac{\pi}{p} \right), \\
	\ell_3 &= 2 \arcosh \left( 2 \cos \tfrac{\pi}{p} \cos \tfrac{\pi}{q} + \cos \tfrac{\pi}{r} \right).
\end{align*}
 We note that the  group $\Delta(p,q,r)$ has (up to conjugacy in $\Delta(p,q,r)$) three cyclic subgroups of finite orders  with $m_\mathcal{R} \in \{ p, q, r \}$ (that statement is also true when $r=2$).  For more details, we refer to \cite[p. 163]{Iwaniec}, \cite[pp. 98-99]{Kubota}.  

In the following Lemma we show that for large values of $m_R$ and, respectively, small values of  $\theta = \frac{\pi}{m_R}$, the contribution of elliptic elements to the Casimir energy becomes large. 
\begin{lemma} \label{le:growth_lemma} 
For $\theta \searrow 0$, 
\begin{equation}\label{eq:growth_lemma}
\int_{0}^{\infty}  \frac{e^{- \theta y}}{e^{- \pi  y} + 1} (1+y^2)^{1/2} dy \gtrsim \theta^{-2}.
\end{equation}
\end{lemma}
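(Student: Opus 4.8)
The plan is to bound the integral from below by restricting the domain of integration and using elementary estimates on the integrand. First I would observe that on the interval $y \in [0, 1/\theta]$ we have $e^{-\theta y} \geq e^{-1}$, a constant, so that
\[
\int_{0}^{\infty}  \frac{e^{- \theta y}}{e^{- \pi  y} + 1} (1+y^2)^{1/2} dy \geq \frac{1}{e} \int_{0}^{1/\theta}  \frac{(1+y^2)^{1/2}}{e^{- \pi  y} + 1} \, dy.
\]
Since $e^{-\pi y} \in (0,1]$ for $y \geq 0$, the denominator $e^{-\pi y} + 1$ lies in $(1, 2]$, so it contributes only a harmless constant factor; thus the right hand side is $\geq \tfrac{1}{2e}\int_0^{1/\theta} (1+y^2)^{1/2}\, dy$. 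Finally, since $(1+y^2)^{1/2} \geq y$, the remaining integral is at least $\int_0^{1/\theta} y \, dy = \tfrac{1}{2}\theta^{-2}$, which gives the claimed lower bound $\gtrsim \theta^{-2}$.

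An alternative, essentially equivalent route would be to simply use $(1+y^2)^{1/2} \geq y$ and $\tfrac{1}{e^{-\pi y}+1} \geq \tfrac12$ directly on all of $[0,\infty)$, reducing the problem to $\int_0^\infty e^{-\theta y}\, y\, dy = \theta^{-2}$, which is an exact Gamma-function evaluation. Either way the bound is immediate once one discards the bounded factor $(e^{-\pi y}+1)^{-1}$ and the additive $1$ inside the square root.

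There is no real obstacle here: the statement is an elementary lower bound, and the only mild subtlety is making sure the thrown-away factors are bounded \emph{below} by positive constants (which they are, uniformly in $y \geq 0$) rather than merely bounded above. I would present the second version since it is the shortest and makes the scaling $\theta^{-2}$ completely transparent.
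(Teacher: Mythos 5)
Your proof is correct, and both of your variants work; the second one in particular is airtight: since $e^{-\pi y}\le 1$ the denominator is at most $2$, and $(1+y^2)^{1/2}\ge y$, so the integral is at least $\tfrac12\int_0^\infty y\,e^{-\theta y}\,dy=\tfrac12\theta^{-2}$ for \emph{every} $\theta>0$, not merely asymptotically. This is a genuinely different and more elementary route than the paper's. The paper instead substitutes $t=e^{-y}$, bounds $\tfrac{1}{t^\pi+1}\ge\tfrac12$ on $[0,1]$, and recognizes the resulting integral $\int_0^1 t^{\theta-1}(1+\log^2 t)^{1/2}\,dt$ as a constant multiple of $\pmb{K}_1(\theta)/\theta$ via the same Struve-function identity \cite[(11.5.2)]{NIST} used in Lemma~\ref{lemma:elliptic_contribution}; the $\theta^{-2}$ growth is then read off from the small-argument behaviour of $\pmb{H}_1$ and $Y_1$. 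What the paper's route buys is consistency with the machinery already set up in this section and a precise leading asymptotic constant for the lower bound, rather than just the order of growth; what your route buys is brevity, a completely explicit constant, and independence from any special-function asymptotics. Either proof is acceptable here, since the lemma only claims $\gtrsim\theta^{-2}$.
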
	
\begin{proof}
Using \eqref{eq:integral_easy_elliptic}, we evaluate the left hand side of \eqref{eq:growth_lemma} from below by 
\[
\frac{1}{2} \int_0^1 t^{\theta-1} (1+\log(t)^2)^{1/2} dt = \frac{\pi  \pmb{K}_1(\theta)}{4 \theta }.
\]
Around $\theta=0$, 
by \cite[11.2.1]{NIST},  
\[ \pmb{H}_1(\theta) = \mathcal O (\theta^2), \quad \theta \to 0.\]
By \cite[10.7.4]{NIST}, 
\[ Y_1(\theta) = \frac{-1}{2\pi \theta} + \mathcal O(1), \quad \theta \to 0. \]
Since 
\[ \pmbK_1(\theta) = \pmb{H}_1(\theta) - Y_1(\theta),\]
we therefore have 
\[ \frac{\pi  \pmb{K}_1(\theta)}{4 \theta } = \frac{1}{8 \theta^2} + \mathcal O(1), \quad \theta \to 0.\]
\end{proof}

\begin{lemma} \label{le:elliptic}
	The elliptic contribution to the Casimir energy of $\Delta(2,3,7) \setminus \mathbb{H}$ rounded to six decimal places is equal to 0.875676.  
\end{lemma}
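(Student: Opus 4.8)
The plan is to apply Lemma~\ref{eq:contribution_from_elliptic_elements_to_casimir_energy} to the specific group $\Delta(2,3,7)$ and then evaluate the resulting finite-plus-rapidly-convergent sum numerically to the claimed precision. First I would record that, as noted in \S\ref{sec_class_elements}, $\Delta(2,3,7)$ has exactly three conjugacy classes of primitive elliptic elements, with orders $m_\mathcal{R} \in \{2, 3, 7\}$. Substituting these three values into the formula of Lemma~\ref{eq:contribution_from_elliptic_elements_to_casimir_energy}, the elliptic contribution becomes the sum of three pieces:
\begin{align*}
E_{\mathrm{ell}} &= \sum_{m \in \{2,3,7\}} \;\sum_{\ell=1}^{m-1} \frac{1}{8 m \sin(\pi\ell/m)} \sum_{n=0}^\infty \frac{1}{2^{n+1}} \sum_{k=0}^n (-1)^k \binom{n}{k} \frac{\pmb{K}_1\!\left[(k+\tfrac{\ell}{m})\pi\right]}{k+\tfrac{\ell}{m}}.
\end{align*}
For $m=2$ there is one inner term ($\ell=1$), for $m=3$ two terms ($\ell=1,2$, which by symmetry of $\sin$ contribute equally up to the argument of $\pmb{K}_1$), and for $m=7$ six terms.

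Next I would control the tail. By the error bound \eqref{eq:bound_for_12} of Lemma~\ref{lemma:elliptic_contribution}, applied with $C = \pi\ell/m$, $D = \pi$, truncating the $n$-sum at $N$ introduces an error at most $\tfrac{1}{2^N}\cdot\tfrac{\pi C \pmb{K}_1(C)+4}{2C^2}$ in each integral; after multiplying by the prefactors $4^s\sqrt\pi 2^{-s-1/2}\Gamma(1-s)$ at $s=-1/2$ (which is $\tfrac14$) and by $\tfrac{1}{8m\sin(\pi\ell/m)}$, and summing over the finitely many $(m,\ell)$ pairs, one gets a total truncation error bounded by a fixed constant times $2^{-N}$. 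Choosing $N$ large (say $N \approx 40$, well within the $500$-digit PARI/GP precision mentioned in \S\ref{ss:o}) makes this error far smaller than $10^{-7}$, so that rounding to six decimal places is unambiguous. The smallest value of $C$ occurring is $C = \pi/7$, for which $\pmb{K}_1(\pi/7)$ is a moderate explicit constant, so the per-term bound is harmless.

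Then I would evaluate: each $\pmb{K}_1$ value is computed via Definition~\ref{def:K_pmbK}, i.e.\ $\pmb{K}_1 = \pmb{H}_1 - Y_1$, using the built-in Struve and Bessel routines (or their series/asymptotic expansions) to high precision, and the finite double sum over $n \le N$, $k \le n$ and over the $1+2+6 = 9$ pairs $(m,\ell)$ is summed exactly. The computation is recorded in the ``Numerics'' subsection as one of the PARI/GP computations. Finally I would report the rounded result $0.875676$. The main obstacle is not conceptual but bookkeeping: one must make sure the rigorous tail estimate \eqref{eq:bound_for_12} is genuinely uniform over all nine $(m,\ell)$ pairs and that the accumulated rounding error in evaluating $\pmb{K}_1$ at the (finitely many) arguments stays below the threshold needed to pin down the sixth decimal — both of which follow from the exponential decay in $N$ together with the generous working precision.
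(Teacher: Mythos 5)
Your proposal is correct and follows essentially the same route as the paper: restrict to the three primitive elliptic classes of orders $2,3,7$, apply the Struve-function identity of Lemma~\ref{lemma:elliptic_contribution} to each of the nine $(m,\ell)$ integrals, control the truncation error uniformly via the bound at the worst case $C=\pi/7$, and evaluate the resulting finite sum in PARI/GP. The only cosmetic difference is the choice of truncation parameter (the paper takes $N=100$, yielding an error below $10^{-25}$ after accounting for accumulated rounding over the roughly $10^4$ summands), which does not affect the argument.
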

\begin{proof}
	We note that the only elliptic elements in this group are those of order $2$, $3$ and $7$ \cite[Proposition 2.1]{Vogeler}.
Thus, we are interested in the sum
	\begin{align}\label{eq:to_evalute_in_lemma34}
		&\sum_{m_\cR=2,3,7} \sum_{k=1}^{m_\cR-1} \frac{1}{2 m_\cR \sin(k \frac{\pi}{m_\cR})} \int_\R \frac{e^{-2 k \frac{\pi}{m_\cR}  y}}{e^{-2 \pi y}+1} (\tfrac14+ y^2)^{1/2} dy. 
\end{align}
We can use Lemma \ref{lemma:elliptic_contribution} to evaluate integrals in  \eqref{eq:to_evalute_in_lemma34}. 
In order to choose $N$ that would provide a sufficiently accurate  approximation, we recall  \eqref{eq:integrals_transform}. Its evaluation is equivalent to the evaluation of the integral in Lemma \ref{lemma:elliptic_contribution} for $D = \pi$ and various values of 
\[C \in \left\{ \frac{\pi \ell}{m_\cR}, \ell \in \{1, \ldots, m_\cR -1 \}, m_\cR  \in \{2, 3, 7\} \right\}.\]
We also note that the right hand side  of  \eqref{ineq:error_bound} is a decreasing function of $C$, thus it will suffice to find the error for $C = \pi / 7$.  Further, we note that for $N=100$, the error in the evaluation of \eqref{eq:integrals_transform} can be bounded by  
 \begin{equation}\label{eq:bound_for_121}
  4^{-1/2}   \frac{1}{2^N}\frac{\frac{\pi^2}{7} \pmb{K}_1(\frac{\pi}{7})+4}{2 (\pi/7)^2} <  10^{-29} .
 \end{equation}
Given that we only want to evaluate the elliptic contribution up to six significant digits, this certainly suffices, but we need to take into account the accumulation of errors that will appear once we find the total sum of on the order of $10^4$ summands.  Consequently, that will slightly decrease the precision to the order of  $10^{-25}$.  Moreover, 
\[
\max_{m_\cR = 2, 3, 7} \max_{k = 1, \ldots, m_\cR} \left| \frac{1}{2 m_{\mathcal{R}} \sin(k \frac{\pi}{m_{\mathcal{R}}})} \right| < 1.
\]
Thus, using  Lemma \ref{lemma:elliptic_contribution} with $N=100$ would be sufficient to evaluate \eqref{eq:to_evalute_in_lemma34} up to  six significant figures, and we obtain the value 0.875676.

\end{proof}

\section{Identity contribution} \label{s:identity}
It is possible to rewrite an identity contribution to $\zeta_\Gamma(s)$ as an infinite sum of special functions in the same spirit as we did for the elliptic contribution in Lemma \ref{lemma:elliptic_contribution}. 
\begin{lemma}
	The identity contribution to the spectral zeta function,  
 \[\frac{\vol(\Gamma \backslash \mathbb{H})}{8(s-1)} \int_\R (\tfrac{1}{4}+r^2)^{1-s} \sech^2 (\pi r) dr\]
is equal to 
 \[ \frac{\vol(\Gamma \backslash \mathbb{H})}{8(s-1)} 4^s \pi^{s-1} \Gamma(2-s)  \sum_{n=0} ^\infty \sum_{k=0} ^n \frac{(n+1)(-1)^{k}}{2^{n+3/2+s}} { n \choose k} \frac{\pmbK_{3/2-s}(\pi + \pi k)}{(1+k)^{3/2-s}}.\]
This sum converges exponentially fast.  In particular, for $s=-1/2$ this  
 is equal to 
\[ - \frac{\vol(\Gamma \backslash \mathbb{H})}{\pi } \sum_{n=0} ^\infty \sum_{k=0} ^n \frac{ (n+1)(-1)^{k}}{2^{n+6}}{ n \choose k}  \frac{ \pmbK_2 (\pi + \pi k)}{(1+k)^2}. \]

\end{lemma}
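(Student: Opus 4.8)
The plan is to reduce the statement to a single computation, namely that of
$I(s):=\int_\R(\tfrac14+r^2)^{1-s}\sech^2(\pi r)\,dr$, and then to run the argument of Lemma~\ref{lemma:elliptic_contribution} essentially verbatim, the only genuinely new feature being that a \emph{squared} geometric factor appears where \eqref{eq:series_expansion_of_1/x+1} had a simple one. First I would use that the integrand is even and substitute $t=2r$ to get $I(s)=4^{s-1}\int_0^\infty(1+t^2)^{1-s}\sech^2(\tfrac{\pi t}{2})\,dt$, then rewrite $\sech^2(\tfrac{\pi t}{2})=\dfrac{4e^{-\pi t}}{(1+e^{-\pi t})^2}$, so that $I(s)=4^{s}\int_0^\infty(1+t^2)^{1-s}\dfrac{e^{-\pi t}}{(1+e^{-\pi t})^2}\,dt$. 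Then $\tfrac{\vol(\Gamma\backslash\mathbb H)}{8(s-1)}I(s)$ is the quantity to identify.

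Next, in place of \eqref{eq:series_expansion_of_1/x+1} I would differentiate that expansion once (equivalently, expand directly), obtaining for $|x-1|<2$
\[
\frac{1}{(1+x)^2}=\sum_{n=0}^\infty (n+1)(-1)^n 2^{-n-2}(x-1)^n=\sum_{n=0}^\infty 2^{-n-2}(n+1)\sum_{k=0}^n\binom{n}{k}(-1)^k x^k ,
\]
which I apply with $x=e^{-\pi t}\in(0,1]$. On $[0,1]$ the partial sums of the middle series converge uniformly (the tail of $\sum(n+1)2^{-n-2}$ being summable), which — after the change of variables $u=e^{-\pi t}$, exactly as in Lemma~\ref{lemma:elliptic_contribution} and using that powers of $\log u$ are integrable near $u=0$ while the exponential decay of $\sech^2$ tames the factor $(1+t^2)^{1-s}$ near $t=\infty$ — justifies interchanging the summation with the integral against the resulting $L^1$ weight. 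This gives
\[
I(s)=4^{s}\sum_{n=0}^\infty (n+1)2^{-n-2}\sum_{k=0}^n\binom{n}{k}(-1)^k\int_0^\infty(1+t^2)^{1-s}e^{-\pi(k+1)t}\,dt .
\]

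The inner integral is then evaluated by \cite[(11.5.2)]{NIST} with $\nu=\tfrac32-s$ (so that $\nu-\tfrac12=1-s$, legitimate since $\Rea\nu>-\tfrac12$ whenever $\Rea s<2$, in particular for $\Rea s<0$) and $z=\pi(k+1)$; collecting the powers of $2$ and $\pi$ and using $\Gamma(\nu+\tfrac12)=\Gamma(2-s)$ yields $\int_0^\infty(1+t^2)^{1-s}e^{-\pi(k+1)t}\,dt=2^{1/2-s}\pi^{s-1}\Gamma(2-s)\,\dfrac{\pmbK_{3/2-s}(\pi+\pi k)}{(1+k)^{3/2-s}}$. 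Substituting back, pulling $4^s\pi^{s-1}\Gamma(2-s)$ out of the double sum (note $4^s(n+1)2^{-n-2}\cdot 2^{1/2-s}=4^s(n+1)2^{-(n+3/2+s)}$), and multiplying by $\tfrac{\vol(\Gamma\backslash\mathbb H)}{8(s-1)}$ gives the stated formula. For $s=-\tfrac12$ one simply inserts $\Gamma(\tfrac52)=\tfrac{3}{4}\sqrt\pi$, $4^{-1/2}=\tfrac12$, $2^{n+3/2+s}=2^{n+1}$ and $\tfrac1{8(s-1)}=-\tfrac1{12}$, and checks $2^{5}\cdot 2^{n+1}=2^{n+6}$, to recover the displayed special case with the leading $-\tfrac{\vol(\Gamma\backslash\mathbb H)}{\pi}$.

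For exponential convergence I would, as in Lemma~\ref{lemma:elliptic_contribution}, avoid absolute convergence of the double series (it converges only conditionally, through the cancellation in $\sum_k\binom nk(-1)^k(\cdots)$) and instead keep the explicit remainder $\frac1{(1+x)^2}-\sum_{n=0}^{N-1}(n+1)(-1)^n2^{-n-2}(x-1)^n=\dfrac{(1-x)^N\big((N+2)+Nx\big)}{2^{N+1}(1+x)^2}$; for $x\in(0,1]$ this is bounded by $\tfrac{N+1}{2^N}$, so the truncation error of the identity-contribution series is $O(N2^{-N})\cdot|4^s|\int_0^\infty(1+t^2)^{\Rea(1-s)}\sech^2(\tfrac{\pi t}{2})\,dt$, which decays exponentially. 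The main obstacle is precisely this justification of the termwise integration: since $(1+t^2)^{1-s}$ grows polynomially for $\Rea s<0$ and the crude bound on $\sum_n(n+1)2^{-n-2}\sum_k\binom nk$ diverges, the exchange must be phrased via uniform convergence of the partial sums of the geometric-type series times a fixed $L^1$ weight (equivalently, via the explicit remainder above), rather than by Fubini--Tonelli — the same point already handled in Lemma~\ref{lemma:elliptic_contribution}, here carrying an extra factor linear in $N$ coming from the squared denominator.
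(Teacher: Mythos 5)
Your proposal is correct and follows essentially the same route as the paper: the same reduction of $\sech^2$ to $\frac{4e^{-\pi t}}{(1+e^{-\pi t})^2}$ after the substitution $t=2r$, the same differentiated geometric expansion of $(1+x)^{-2}$, the same termwise integration against the $L^1$ weight, and the same evaluation of the inner integral via \cite[(11.5.2)]{NIST}. Your explicit remainder $\frac{(1-x)^N((N+2)+Nx)}{2^{N+1}(1+x)^2}$ is a slightly more detailed justification of the exponential convergence than the paper's appeal to ``estimates analogous to Lemma~3.1,'' but it is the same idea.
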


\begin{proof}
Observe that for any constant $D>0$,  
\[ \sech^2(D r) = \frac{4 e^{-2 D r}}{(1 + e^{-2 D r})^2}.\]

Then
\[ \int_\R (\tfrac{1}{4}+r^2)^{1-s} \sech^2 (D r) dr
=  \int_\R (\tfrac{1}{4}+r^2)^{1-s}\frac{4 e^{-2 D r}}{(1 + e^{-2 D r})^2} dr  \] 
\[ = 4^{s-1} \int_\R (1+(2r)^2)^{1-s}\frac{4 e^{-2 D r}}{(1 + e^{-2 D r})^2} dr =  4^s  \int_0 ^\infty (1+y^2)^{1-s} \frac{e^{-Dy}}{(1+e^{-Dy})^2} dy.\]

Above we used the substitution $y=2r$ and the fact that the integrand is even. For $|x-1|<2$,   (compare with  \eqref{eq:series_expansion_of_1/x+1}) 
\[ \frac{1}{(1+x)^2} = - \frac{d}{dx} \frac{1}{x+1}  = \sum_{n = 0} ^\infty (-1)^{n} (x-1)^n (n+1) 2^{-n-2}\]
\[ = \sum_{n= 0} ^\infty \sum_{k=0} ^n (-1)^{k} (n+1) 2^{-n-2} x^k {n \choose k}. \]
We use this together with the absolute convergence of the integral (since $D>0$) to obtain (with $x=e^{-Dy}$) 
\[4^s \sum_{n= 0} ^\infty \sum_{k=0} ^n (-1)^{k} \frac{n+1}{ 2^{n+2}}  {n \choose k} \int_0 ^\infty (1+y^2)^{1-s}e^{-(D+kD)y} dy. \]
By \cite[11.5]{NIST}, this is equal to 
\[4^s \sqrt \pi \Gamma(2-s) 2^{1/2-s} \sum_{n= 0} ^\infty \sum_{k=0} ^n (-1)^{k} \frac{n+1}{ 2^{n+2}}  {n \choose k} (D+Dk)^{s-3/2} \pmbK_{3/2-s}(D+Dk)  \]

Setting $D=\pi$ this becomes 
\[ 4^s \pi^{s-1} \Gamma(2-s)  \sum_{n=0} ^\infty \sum_{k=0} ^n \frac{(n+1)(-1)^{k}}{2^{n+3/2+s}} { n \choose k} \frac{\pmbK_{3/2-s}(\pi + \pi k)}{(1+k)^{3/2-s}}. \]
Recalling the factor of 
\[ \frac{\vol(\Gamma \backslash \mathbb{H})}{8(s-1)} \] 
completes the first statement of the Lemma.  
Estimates analogous to the proof of Lemma \ref{lemma:elliptic_contribution} show the exponential rate of convergence.  Specializing to $s=-1/2$ we obtain that the identity contribution to the Casimir energy is 
\[ - \frac{\vol(\Gamma \backslash \mathbb{H})}{\pi } \sum_{n=0} ^\infty \sum_{k=0} ^n \frac{ (n+1)(-1)^{k}}{2^{n+6}}{ n \choose k}  \frac{ \pmbK_2 (\pi + \pi k)}{(1+k)^2}. \]

\end{proof}

For our purposes, we do not need the full precision of the expression in the preceding Lemma.  As we will show in Lemma \ref{le:identity}, specialized to for the $(2,3,7)$-triangle group orbifold, the rough estimate we obtain in Lemma \ref{lemma:identity_in_Hurwitz} shows that the contribution from the identity element is bounded within the interval of approximate size $10^{-3}$.  That is sufficient for our application to estimate the Casimir energy and show that under a natural assumption on the lengths of the closed geodesics of the surface, it is positive.

\begin{lemma}\label{lemma:identity_in_Hurwitz}
	The identity contribution to the Casimir energy is contained in the interval 
		\[ \left(  -  \frac{2 \vol(\Gamma \backslash \mathbb{H})}{45 \pi }, -  \frac{\vol(\Gamma \backslash \mathbb{H})}{36 \pi }\right). \] 
\end{lemma}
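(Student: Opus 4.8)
The plan is to bypass the Struve-function series of the preceding lemma and instead estimate the identity contribution directly from its integral form. Substituting $s=-1/2$ into the identity term $\frac{\vol(\Gamma\backslash\mathbb H)}{8(s-1)}\int_\R(\tfrac14+r^2)^{1-s}\sech^2(\pi r)\,dr$ of \eqref{eq:first_line_in_zeta} gives $-\frac{\vol(\Gamma\backslash\mathbb H)}{12}\,I$ with $I:=\int_\R(\tfrac14+r^2)^{3/2}\sech^2(\pi r)\,dr$. Hence it suffices to show $\tfrac1{3\pi}<I<\tfrac8{15\pi}$, because $\tfrac1{12}\cdot\tfrac1{3}=\tfrac1{36}$ and $\tfrac1{12}\cdot\tfrac8{15}=\tfrac2{45}$, so the claimed interval follows upon multiplying through by $-\tfrac{\vol(\Gamma\backslash\mathbb H)}{12}$ (which reverses the inequalities, the volume being positive).

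The one ingredient needed is the family of elementary moments $m_j:=\int_\R r^{2j}\sech^2(\pi r)\,dr$ for $j=0,1,2$. Writing $u=\pi r$ and $\sech^2 u=4\sum_{k\ge1}(-1)^{k+1}k\,e^{-2ku}$ (for $u>0$) and integrating term by term expresses these through Dirichlet eta values, yielding $m_0=\tfrac2\pi$, $m_1=\tfrac1{6\pi}$, $m_2=\tfrac7{120\pi}$; alternatively one may simply cite them as standard integrals.

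For the lower bound I would invoke convexity of $t\mapsto t^{3/2}$ on $[0,\infty)$: its tangent at $t=\tfrac14$ gives $(\tfrac14+r^2)^{3/2}\ge\tfrac18+\tfrac34 r^2$, hence $I\ge\tfrac18 m_0+\tfrac34 m_1=\tfrac1{4\pi}+\tfrac1{8\pi}=\tfrac3{8\pi}>\tfrac1{3\pi}$ (as $\tfrac9{24}>\tfrac8{24}$). For the upper bound I would use concavity of $u\mapsto\sqrt{\tfrac14+u}$, whose tangent at $u=0$ gives $\sqrt{\tfrac14+r^2}\le\tfrac12+r^2$; multiplying by $\tfrac14+r^2$ yields $(\tfrac14+r^2)^{3/2}\le\tfrac18+\tfrac34 r^2+r^4$, hence $I\le\tfrac18 m_0+\tfrac34 m_1+m_2=\tfrac1{4\pi}+\tfrac1{8\pi}+\tfrac7{120\pi}=\tfrac{13}{30\pi}<\tfrac{16}{30\pi}=\tfrac8{15\pi}$. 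Feeding $\tfrac1{3\pi}<I<\tfrac8{15\pi}$ back into $-\tfrac{\vol(\Gamma\backslash\mathbb H)}{12}I$ gives exactly $\bigl(-\tfrac{2\vol(\Gamma\backslash\mathbb H)}{45\pi},-\tfrac{\vol(\Gamma\backslash\mathbb H)}{36\pi}\bigr)$, with both endpoints strict.

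I do not expect a genuine obstacle: the argument is a few lines of calculus once one decides to work with $I$ rather than the Struve series. The only point demanding care is that the upper estimate $\tfrac{13}{30\pi}<\tfrac{16}{30\pi}$ has rather little slack, so it should be carried out with exact fractions (never decimals), and the moment $m_2=\tfrac7{120\pi}$, which drives that inequality, deserves a double-check. If a reader dislikes the concavity trick for the square root, one may instead truncate the series of the previous lemma at a small $N$ and control the remainder via \eqref{eq:error_in_evaluation}, but the route above is shorter and fully self-contained.
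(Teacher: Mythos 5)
Your proposal is correct and follows essentially the same route as the paper: both reduce the identity contribution to $-\tfrac{\vol(\Gamma\backslash\mathbb H)}{12}\int_\R(\tfrac14+r^2)^{3/2}\sech^2(\pi r)\,dr$, sandwich the integrand between polynomials in $r^2$, and evaluate the resulting moments $\int_\R r^{2j}\sech^2(\pi r)\,dr$ exactly (your values $\tfrac2\pi$, $\tfrac1{6\pi}$, $\tfrac7{120\pi}$ all check out against the Gradshteyn--Ryzhik formula the paper cites). The only difference is cosmetic: the paper uses the comparisons $(1+4r^2)\le(1+4r^2)^{3/2}\le(1+4r^2)^2$, whose moment integrals give the interval endpoints exactly, whereas your tangent-line bounds yield the strictly sharper enclosure $\tfrac3{8\pi}\le I\le\tfrac{13}{30\pi}$, which comfortably implies the claim.
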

\begin{proof}
	We use \cite[Theorem 1]{Randal} to rewrite the contribution from the identity element as ($s=-\frac 12$)
	\begin{align*}
	  & \left.  \frac{\vol(\Gamma \backslash \mathbb{H})}{8(s-1)}  \int_{-\infty}^{\infty} (\tfrac{1}{4}+r^2)^{1-s} \sech^2 (\pi r) dr \right|_{s=-1/2} \\
		& \ \ \ \ \ \ = -\frac{\vol(\Gamma \backslash \mathbb{H})}{12} \int_{-\infty}^{\infty} (\tfrac{1}{4}+r^2)^{\tfrac{3}{2}} \sech^2 (\pi r) dr.
	\end{align*}
	We evaluate this from below as follows:
	\begin{align*}
		-\frac{ \vol(\Gamma \backslash \mathbb{H})}{12} \int_{-\infty}^{\infty} (\tfrac{1}{4}+r^2)^{\tfrac{3}{2}} \sech^2(\pi r) dr 
   \ge -\frac{\vol(\Gamma\backslash\mathbb H)}{96} \int_{-\infty}^{\infty} (1+4 r^2)^{2} \sech^2(\pi r) dr.
	\end{align*}
 
We shall calculate this integral using \cite[3.527.3.12]{gr} which states that 
		\[ \int_0 ^\infty \frac{x^{b-1}}{\cosh^2 x} dx= 2^{2-b} (1-2^{2-b}) \Gamma(b) \zeta(b-1), \quad \text{Re} (b) > 0, \quad b \neq 2,\]
	where $\zeta$ denotes the Riemann zeta function.
 Then, we note that 
	\begin{align*}
	 \int_{-\infty} ^\infty & (4r^2+1)^2 \sech^2(\pi r) dr = 2 \int_0 ^\infty (16r^4+8r^2+1)\sech^2(\pi r) dr\\ 
& = 2 \left( \int_0 ^\infty \left( \frac{16 s^4}{\pi^4} \sech^2(s) + \frac{8 s^2}{\pi^2} \sech^2(s) + \sech^2(s) \right) \frac{ds}{\pi}\right) \\
& = 2 \left( \frac{16}{\pi^5} 2^{2-5} (1-2^{2-5}) \Gamma(5) \zeta(4) + \frac{8}{\pi^3} 2^{2-3} (1-2^{2-3}) \Gamma(3)\zeta(2) + \frac 1 \pi \right) \\
& = 2\left( \frac{7}{15 \pi} + \frac{2}{3\pi} + \frac{1}{\pi} \right) = \frac{64}{15 \pi}. 
	\end{align*}
	This shows that the contribution from the identity element is bigger than 
	\[
	-\frac{\vol(\Gamma \backslash \mathbb{H})}{96} \cdot  \frac{64}{15 \pi} = -  \frac{2 \vol(\Gamma \backslash \mathbb{H})}{45 \pi }.
	\]
	
With the same idea we can evaluate the contribution of the identity from above as well by noting that 
	\begin{align*}
&	-\frac{\vol(\Gamma \backslash \mathbb{H})}{12} \int_{-\infty}^{\infty}  (\tfrac{1}{4}+r^2)^{\tfrac{3}{2}} \sech^2 (\pi r) dr \\ 
		&\leq -\frac{2 \vol(\Gamma \backslash \mathbb{H})}{96}  \int_0^{\infty } \left(4 r^2+1\right) \text{sech}^2(\pi  r) \, dr \\
		& =  -\frac{\vol(\Gamma \backslash \mathbb{H})}{36 \pi }.
	\end{align*}
	\end{proof}
\begin{lemma} \label{le:identity} 
The identity contribution to the Casimir energy of the $(2,3,7)$-triangle group orbifold belongs to $[-0.00211640, -0.00132275]$.   
\end{lemma}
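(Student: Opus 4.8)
The statement is a direct specialization of Lemma~\ref{lemma:identity_in_Hurwitz} to the group $\Delta(2,3,7)$; no new analysis is needed, only the insertion of the correct volume and a numerical evaluation of the resulting rational bounds.

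First I would compute the area of the orbifold from the general formula \eqref{eq:volume} for $(p,q,r)$-triangle groups. Taking $(p,q,r)=(2,3,7)$ gives
\[
\vol\bigl(\Delta(2,3,7)\backslash\mathbb{H}\bigr)
=2\pi\Bigl(1-\bigl(\tfrac12+\tfrac13+\tfrac17\bigr)\Bigr)
=2\pi\Bigl(1-\tfrac{41}{42}\Bigr)
=\frac{\pi}{21}.
\]
Next I would substitute this value into the interval provided by Lemma~\ref{lemma:identity_in_Hurwitz}, which asserts that the identity contribution to the Casimir energy lies in
\[
\left(-\frac{2\,\vol(\Delta(2,3,7)\backslash\mathbb{H})}{45\pi},\ -\frac{\vol(\Delta(2,3,7)\backslash\mathbb{H})}{36\pi}\right)
=\left(-\frac{2}{945},\ -\frac{1}{756}\right).
\]
Finally I would evaluate the two rationals, using $\tfrac{2}{945}=0.0021164\ldots$ and $\tfrac{1}{756}=0.0013227\ldots$, so that the identity contribution is contained in the interval $[-0.00211640,\,-0.00132275]$ claimed in the statement. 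The only point requiring attention is the arithmetic bookkeeping of the decimal expansions and rounding the endpoints in the direction consistent with the inclusion.

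Since Lemma~\ref{lemma:identity_in_Hurwitz} already carries out all of the analytic work — rewriting the identity integral via \cite[Theorem~1]{Randal}, sandwiching $(\tfrac14+r^2)^{3/2}$ between constant multiples of $(1+4r^2)^2$ and $(1+4r^2)$, and evaluating the resulting elementary integrals with \cite[3.527.3.12]{gr} — there is no genuine obstacle in this lemma. The ``hard part,'' such as it is, reduces to recalling the volume formula for the $(2,3,7)$-triangle group and performing the elementary computation above.
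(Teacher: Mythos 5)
Your proposal is correct and follows exactly the same route as the paper: substitute the $(2,3,7)$ area $\vol(\Delta(2,3,7)\backslash\mathbb H)=2\pi(1-\tfrac{41}{42})=\tfrac{\pi}{21}$ from \eqref{eq:volume} into the interval of Lemma~\ref{lemma:identity_in_Hurwitz}, obtaining the endpoints $-\tfrac{2}{945}\approx-0.00211640$ and $-\tfrac{1}{756}\approx-0.00132275$. The arithmetic checks out, so nothing further is needed.
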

\begin{proof}
	We use Lemma \ref{lemma:identity_in_Hurwitz} and the formula for the area of the surface, \eqref{eq:volume}, to get the  upper estimate 
	\[
	-\frac{2 \pi \left(1-\left(\tfrac{1}{2}+\tfrac{1}{3}+\tfrac{1}{7}\right)\right)}{36 \pi } = -\frac{1}{756} \approx -0.00132275
	\]
	and the lower estimate
	\[
	-\frac{4 \pi \left(1-\left(\tfrac{1}{2}+\tfrac{1}{3}+\tfrac{1}{7}\right)\right)}{45 \pi } =  -\frac{2}{945} \approx -0.00211640.
	\]
\end{proof}

\section{Contribution from hyperbolic elements}	\label{s:hyperbolic}
The hyperbolic contribution to $\zeta_\Gamma(s)$ (see \eqref{eq:first_line_in_zeta})  is equal to 
\begin{align*}	 & \frac{(4 \pi)^{-1/2}}{\Gamma(s)}  \sum_{ \{ \mathcal{P} \}_p} \sum_{n=1}^\infty ( \ell_\gamma / n)^{1/2} \csch(\tfrac{n \ell_\gamma}{2}) (n \ell_\gamma)^s K_{1/2-s} (\tfrac{n \ell_\gamma}{2}).
\end{align*}
We recall that $\sum_{ \{ \mathcal{P} \}_p}$ denotes the summation over all conjugacy classes of primitive hyperbolic elements.
Specialized at $s=-1/2$, this reads
\begin{align}\label{eq:H_contribution}
	 & \frac{ (4 \pi)^{-1/2}}{\Gamma(-1/2)} \sum_{ \{ \mathcal{P} \}_p} \sum_{n=1}^\infty (\ell_\gamma / n)^{1/2} \csch(\tfrac{n \ell_\gamma}{2}) (n \ell_\gamma)^{-1/2} K_{1} (\tfrac{n \ell_\gamma}{2}) \nonumber  \\
	&= -\frac{1}{4 \pi }\sum_{ \{ \mathcal{P} \}_p} \sum_{n=1}^\infty (\ell_\gamma / n)^{1/2} \csch(\tfrac{n \ell_\gamma}{2}) (n \ell_\gamma)^{-1/2} K_{1} (\tfrac{n \ell_\gamma}{2}) \nonumber \\
	&= -\frac{1}{4 \pi }\sum_{ \{ \mathcal{P} \}_p} \sum_{n=1}^\infty \tfrac{1}{n}\csch(\tfrac{n \ell_\gamma}{2}) K_{1} (\tfrac{n \ell_\gamma}{2}).
\end{align}

In the following Lemma we will take advantage of the fact that Vogeler \cite[p.32]{Vogeler} obtained explicit expressions for lengths of the  first 50 primitive closed geodesics of the $(2,3,7)$ orbifold surface.  With this we can calculate their contribution to the Casimir energy quite accurately. 


\begin{lemma}\label{lemma:hyperbolic_contribution}
	The contribution to the Casimir energy of  $\Delta(2,3,7) \backslash \mathbb{H}$ from the first 50 primitive geodesics rounded to six decimal places is equal to $-0.5680851$.
\end{lemma}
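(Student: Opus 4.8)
The plan is to combine the closed form \eqref{eq:H_contribution} for the hyperbolic contribution with Vogeler's explicit list of short geodesic lengths, truncate the resulting double sum at a finite level, and control the truncation error by an explicit geometric tail.

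By \eqref{eq:H_contribution}, the hyperbolic contribution to the Casimir energy equals
\[
-\frac{1}{4\pi}\sum_{\{\mathcal{P}\}_p}\sum_{n=1}^{\infty}\frac1n\,\csch\!\big(\tfrac{n\ell_\gamma}{2}\big)\,K_{1}\!\big(\tfrac{n\ell_\gamma}{2}\big),
\]
so the quantity in question is the sum of the inner series over the fifty conjugacy classes $\mathcal{P}_1,\dots,\mathcal{P}_{50}$ carrying the fifty shortest primitive closed geodesics, of lengths $\ell_1\le\ell_2\le\dots\le\ell_{50}$ read off from \cite[p.~32]{Vogeler}. One point requiring care here is the bookkeeping: the Selberg trace formula sums over $\Gamma$-conjugacy classes of primitive hyperbolic elements, so one must use \cite{Vogeler}'s enumeration and its orientation/multiplicity conventions consistently when matching geodesics to classes.

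For the truncation, fix a cutoff $N$ and set $z_1=\ell_1/2$. Both $z\mapsto e^{z}\csch z=2/(1-e^{-2z})$ and $z\mapsto e^{z}K_{1}(z)$ are decreasing on $(0,\infty)$ (the latter, e.g., from the integral representation $e^{z}K_{1}(z)=\int_0^\infty e^{-v}\sqrt{v^2/z^2+2v/z}\,dv$), so for every argument $n\ell_j/2\ge z_1$ one has $\csch(n\ell_j/2)\le\csch(z_1)e^{z_1-n\ell_j/2}$ and $K_1(n\ell_j/2)\le K_1(z_1)e^{z_1-n\ell_j/2}$. Hence, using also $\ell_j\ge\ell_1$ and $\tfrac1n\le 1$,
\[
\sum_{n>N}\frac1n\,\csch\!\big(\tfrac{n\ell_j}{2}\big)K_1\!\big(\tfrac{n\ell_j}{2}\big)\ \le\ \csch(z_1)K_1(z_1)\,e^{\ell_1}\,\frac{e^{-(N+1)\ell_1}}{1-e^{-\ell_1}}.
\]
Summing the fifty such tails and dividing by $4\pi$ shows the truncation error is $O\!\big(e^{-(N+1)\ell_1}\big)$; since $\ell_1$ is an explicit positive constant (the orbifold systole, from \cite{Vogeler}), a modest $N$ renders this error far below $10^{-7}$.

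Finally, I would evaluate the finite double sum $-\tfrac1{4\pi}\sum_{j=1}^{50}\sum_{n=1}^{N}\tfrac1n\csch(\tfrac{n\ell_j}{2})K_1(\tfrac{n\ell_j}{2})$ numerically in PARI/GP at the $500$-digit precision used throughout the paper (so that accumulated rounding over the $O(50N)$ evaluated terms is utterly negligible), substitute Vogeler's values $\ell_1,\dots,\ell_{50}$, and round to six decimals to obtain $-0.5680851$. I expect the only genuine subtlety to be the correct identification of the first fifty conjugacy classes and their lengths from \cite{Vogeler}, including the orientation/multiplicity convention; the tail estimate and the numerical evaluation are otherwise routine given the exponential decay of $K_1$ and $\csch$.
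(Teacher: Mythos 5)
Your proposal is correct and follows essentially the same route as the paper: evaluate \eqref{eq:H_contribution} numerically over the fifty shortest primitive geodesics using Vogeler's explicit length data, with the truncation of the $n$-sum controlled by the exponential decay of $\csch$ and $K_1$ (your explicit geometric tail bound is a nice addition the paper leaves implicit). The one subtlety you flag but defer --- matching geodesics to conjugacy classes --- is in fact where the paper spends most of its effort, introducing a count $s(\gamma)$ of distinct classes among $\{\gamma\},\{\gamma^{-1}\},\{\gamma^*\},\{(\gamma^*)^{-1}\}$ and noting one accidental length coincidence at $\ell\approx 5.2889$ between genuinely distinct classes, so that bookkeeping would need to be carried out rather than merely cited.
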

\begin{proof}
The proof of this lemma uses explicit formulas for the first geodesics calculated in  \cite[p. 32]{Vogeler}. We note  
\begin{align*}
	b&=\tfrac{1}{3} \left(\sqrt{3 (\cot^2(\tfrac{\pi }{7})-3)}+\sqrt{3} \cot (\tfrac{\pi }{7})\right), \\
	A &= \left(
	\begin{smallmatrix}
		\cos (\tfrac{\pi }{3}) & \sin (\tfrac{\pi }{3}) \\
		-\sin (\tfrac{\pi }{3}) & \cos (\tfrac{\pi }{3}) \\
	\end{smallmatrix}
	\right), \quad 
	B =\left(
	\begin{smallmatrix}
		\cos \left(\tfrac{\pi }{7}\right) & b \sin \left(\tfrac{\pi }{7}\right) \\
		-b^{-1}\sin \left(\tfrac{\pi }{7}\right) & \cos \left(\tfrac{\pi }{7}\right) \\
	\end{smallmatrix}
	\right), \\
	R & =  A^{-1}B, \quad L=B.
\end{align*}
Above, $A$ and $B$ are respective rotations of order $3$ and $7$ that generate the group $\Delta(2,3,7)$. Thus, each hyperbolic element may be represented as a product of $R$ and $L$.

On \cite[p. 32]{Vogeler}, the author calculates a finite portion of the length spectrum.  To do this, he develops a combinatorial approach which leads to a classification of the conjugacy classes of hyperbolic elements of $\Delta(2,3,7)$, arranged by length. 
For the convenience of the reader, we present approximate lengths of primitive closed geodesics together with representatives of the corresponding conjugacy classes in Table~\ref{tab:mytable}. We note that lengths of closed geodesics can be expressed as a finite combination of cotangents, cosines, sines, square roots and logarithms (as $A$ and $B$ are functions of such) and thus may be calculated with an arbitrary precision.
 
The situation with multiplicities is a bit subtle. 
Let $\gamma \in \Gamma = \Delta(2,3,7)$ be a primitive hyperbolic element and denote the length of the corresponding closed geodesic by~$\ell_\gamma$. Then, $\gamma^{-1}$ is also a hyperbolic element and $\ell_{\gamma^{-1}} = \ell_\gamma$.  As described in \cite[p. 24]{Vogeler}, by changing the $R$'s and $L$'s in the representation of $\gamma$, one obtains a hyperbolic element $\gamma^*$ with $\ell_{\gamma^*} = \ell_\gamma$.   
So we therefore have \cite[p. 24]{Vogeler}
\[
\ell_{\gamma} = \ell_{\gamma^{-1}} = \ell_{\gamma^*} = \ell_{(\gamma^*)^{-1}}.
\]
We let  
$
s(\gamma)
$
be the number of distinct conjugacy classes among
$ \{ \gamma \}, \{ \gamma^{-1} \}$, $\{ \gamma^{*} \}$ and $\{ (\gamma^{*})^{-1} \}$. 
For example, let $\gamma$ be a hyperbolic element and assume that $\{ \gamma \} = \{ \gamma^{-1} \}$ and $\{ \gamma^* \} = \{ (\gamma^*)^{-1} \}$, but $\{ \gamma \}  \neq \{ \gamma^* \}$; in this case we say  $s(\gamma) = 2$. If, on the other hand, $\{ \gamma \} = \{ \gamma^{-1} \} = \{ \gamma^* \} = \{ (\gamma^*)^{-1} \}$, then $s(\gamma) = 1$. If it turns out that the conjugacy classes $\{ \gamma \}$, $ \{ \gamma^{-1} \}$, $\{ \gamma^* \}$ and $ \{ (\gamma^*)^{-1} \}$ are pairwise different, then $s(\gamma)=4$.

We additionally note that $s(\gamma)$ is not necessarily a multiplicity of the geodesic length. It might happen that $\ell_\gamma = \ell_{\gamma'}$, but at the same time, 
\[ \gamma' \not \in \{\gamma \} \cup \{\gamma^{-1} \} \cup \{ \gamma^* \} \cup \{ (\gamma^*)^{-1} \}. \]
In this case, the multiplicity of the geodesic length is bigger than $s(\gamma)$. Among the geodesics that we take into account, this situation happens exactly once: in Table \ref{tab:mytable}, one finds two geodesics of approximate lengths $5.2889$, that are, however, listed separately. 

To sum it up, for each hyperbolic element $\gamma \in \Gamma$, there are $s(\gamma)$ closed geodesics of length $\ell_{\gamma}$ corresponding to distinct conjugacy classes among $ \{ \gamma \}$, $ \{ \gamma^{-1} \}$, $ \{ \gamma^* \}$ and $ \{ (\gamma^*)^{-1} \}$.
 This implies that  Table \ref{tab:mytable} contains the first 50 primitive closed geodesics of the $(2,3,7)$-triangle group orbifold. We denote by $\ell_n$ the length of the $n$-th primitive closed geodesic; thus, $\ell_1 \approx 0.98, \ell_3 = \ell_4 \approx 2.13$. We define
\[
A(\gamma) =  -\frac{1}{4\pi}\sum_{n=1}^\infty\frac{s(\gamma)}{n}\csch\left(\frac{n\ell_\gamma}{2}\right) K_{1} \left(\frac{n\ell_\gamma}{2}\right).
\]
The total contribution of hyperbolic elements to the spectral zeta function is equal to the sum of $A(\gamma)$ where $\gamma$ ranges over all primitive hyperbolic elements. Summing up all of $A(\gamma)$ from Table \ref{tab:mytable}, we obtain a value of $-0.5680851,$ rounding to seven decimal places. 

\begin{table}[htbp]
	\centering
	\begin{tabular}{|c|c|c|c|}
		\hline
		$ \ell_{\gamma} \approx  $ & $s(\gamma)$ & $\gamma$ &  $A(\gamma) \approx $ \\ 
  \hline 0.98399 & 1&$R.L$ &-0.288955    \\ 
  \hline 1.73601 & 1&$R.R.L.L$ &-0.064746   \\ 
  \hline 2.13111 & 2&$R.L.R.L.L$ &-0.069526   \\ 
  \hline 2.66193 & 2&$R.L.R.R.L.L$ &-0.032848   \\ 
  \hline 2.89815 & 2&$R.L.L.R.R.L.L$ &-0.024028   \\ 
  \hline 3.15482 & 2&$R.L.R.L.R.L.L$ &-0.017289   \\ 
  \hline 3.54271 & 1&$R.L.R.R.L.R.L.L$ &-0.0053429   \\ 
  \hline 3.62732 & 2&$R.L.R.L.R.R.L.L$ &-0.0096416   \\ 
  \hline 3.80470 & 2&$R.L.R.R.L.R.R.L.L$ &-0.0077879   \\ 
  \hline 3.93595 & 2&$R.L.R.L.L.R.R.L.L$ &-0.0066608   \\ 
  \hline 4.15197 & 2&$R.L.R.L.R.L.R.L.L$ &-0.0051635   \\ 
  \hline 4.20181 & 1&$R.L.L.R.R.L.R.R.L.L$ &-0.0024355   \\ 
  \hline 4.39146 & 2&$R.L.R.R.L.L.R.R.L.L$ &-0.0039068   \\ 
  \hline 4.48926 & 2&$R.L.R.L.R.R.L.R.L.L$ &-0.0034894   \\ 
  \hline 4.60473 & 2&$R.L.R.L.R.L.R.R.L.L$ &-0.0030555   \\ 
  \hline 4.65401 & 2&$R.L.L.R.R.L.L.R.R.L.L$ &-0.0028877   \\ 
  \hline 4.76043 & 2&$R.L.R.L.R.R.L.R.R.L.L$ &-0.0025571   \\ 
  \hline 4.84180 & 4&$R.L.R.L.L.R.L.R.R.L.L$ &-0.0046617   \\ 
  \hline 4.93876 & 2&$R.L.R.L.R.L.L.R.R.L.L$ &-0.0020879   \\ 
  \hline 5.01322 & 2&$R.L.R.L.L.R.L.L.R.R.L.L$ &-0.0019192   \\ 
  \hline 5.14068 & 2&$R.L.R.L.R.L.R.L.R.L.L$ &-0.0016622   \\ 
  \hline 5.20802 & 2&$R.L.R.L.L.R.R.L.R.R.L.L$ &-0.0015409   \\ 
  \hline 5.28890 & 2&$R.L.R.L.R.L.L.R.L.R.L.L$ &-0.0014072   \\ 
  \hline 5.28890 & 2&$R.L.R.R.L.R.L.L.R.R.L.L$ &-0.0014072   \\ 
  \hline 5.35146 & 2&$R.L.R.L.R.R.L.L.R.R.L.L$ &-0.0013120   \\ 
  \hline 5.42680 & 1&$R.L.R.L.R.R.L.R.L.R.L.L$ &-0.00060298   \\ 
  \hline 5.45943 & 2&$R.L.R.L.R.L.R.R.L.R.L.L$ &-0.0011628   \\ \hline 	\end{tabular}
	\caption{The first several lengths of primitive hyperbolic closed geodesics of the $(2,3,7)$-triangle group orbifold together with their representations and their contribution to the Casimir energy.}
	\label{tab:mytable}
\end{table}

\end{proof}

We denote by 
\[ \cN_L = \# \{ \ell_\gamma \leq L \} \]
the number of primitive hyperbolic geodesics $\gamma$ (counted with multiplicity) of length $\ell_\gamma$ less or equal than $L$. The prime geodesic theorem~\cite{sarnak} states that 
\[ \lim_{L \to \infty} \frac{\log L \cN_L}{L} = 1.\]
Consequently, it follows that when we enumerate these lengths (counting multiplicity) as $\ell_j$, we obtain  
\[ \lim_{j \to \infty} \frac{\ell_j}{\log(j) + \log \log (j)} = 1.\]
We propose that it is reasonable to assume,  and we note that this inequality holds for $j < 51$,  \begin{equation}\label{eq:equality_for_ell_n}
	\ell_j \geq \log(j) + \log \log (j), \quad \forall j \ge 51. \end{equation}



	\begin{lemma}\label{lemma:we_hope_it_holds}
		Under the assumption that \eqref{eq:equality_for_ell_n} holds, the contribution from  all but the first 50 hyperbolic elements is greater than or equal to -0.293867. 
	\end{lemma}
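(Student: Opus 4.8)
The plan is to bound $\tfrac1{4\pi}\mathcal S$, where, by \eqref{eq:H_contribution} (enumerating the conjugacy classes of primitive hyperbolic elements by the lengths $\ell_{1}\le\ell_{2}\le\cdots$ of the corresponding primitive closed geodesics, listed with multiplicity), the contribution in question equals $-\tfrac1{4\pi}\mathcal S$ with
\[
\mathcal S\ :=\ \sum_{j\ge 51}\ \sum_{n\ge 1}\frac1n\,\csch\!\Big(\tfrac{n\ell_{j}}{2}\Big)K_{1}\!\Big(\tfrac{n\ell_{j}}{2}\Big)\ >\ 0.
\]
It suffices to show $\mathcal S\le 4\pi\cdot 0.293867$. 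Put $L_{j}:=\log j+\log\log j$; by \eqref{eq:equality_for_ell_n}, $\ell_{j}\ge L_{j}$ for all $j\ge 51$. Now $t\mapsto\csch t$ and $t\mapsto K_{1}(t)$ are positive and strictly decreasing on $(0,\infty)$, and from $K_{1}(t)=\int_{0}^{\infty}e^{-t\cosh\theta}\cosh\theta\,d\theta$ one sees that $t\mapsto e^{t}K_{1}(t)=\int_{0}^{\infty}e^{-t(\cosh\theta-1)}\cosh\theta\,d\theta$ is decreasing, and so is $t\mapsto e^{t}\csch t=\tfrac{2}{1-e^{-2t}}$. Hence with $t=L_{j}/2$ one has, for each $n\ge 1$, $\csch(\tfrac{n\ell_{j}}{2})K_{1}(\tfrac{n\ell_{j}}{2})\le\csch(nt)K_{1}(nt)\le e^{-2(n-1)t}\csch(t)K_{1}(t)$; summing the resulting geometric series in $n$ (bounding $\tfrac1n\le 1$) gives
\[
\sum_{n\ge 1}\frac1n\csch\!\Big(\tfrac{n\ell_{j}}{2}\Big)K_{1}\!\Big(\tfrac{n\ell_{j}}{2}\Big)\ \le\ \frac{\csch(L_{j}/2)\,K_{1}(L_{j}/2)}{1-e^{-L_{j}}}\ \le\ \bigl(1-e^{-L_{51}}\bigr)^{-1}f(j),\qquad f(x):=\csch\!\Big(\tfrac{L_{x}}{2}\Big)K_{1}\!\Big(\tfrac{L_{x}}{2}\Big).
\]
Since $L_{51}>5$ the prefactor is $<1.006$, so the problem reduces to bounding $\sum_{j\ge 51}f(j)$.

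As $x\mapsto L_{x}/2$ is positive and increasing for $x\ge 3$, the function $f$ is decreasing there. Fix a cutoff $M$ (it will suffice to take $M=10^{6}$) and split $\sum_{j\ge 51}f(j)=\sum_{j=51}^{M}f(j)+\sum_{j>M}f(j)$. The finite sum is bounded from above by direct numerical evaluation — the only computational step, easily done with a computer algebra system. For the tail, monotonicity gives $\sum_{j>M}f(j)\le\int_{M}^{\infty}f(x)\,dx$. Writing $v=L_{x}/2$, so $e^{-2v}=e^{-L_{x}}=(x\log x)^{-1}$ and $v\ge v_{M}:=L_{M}/2$, and combining $\csch v=\tfrac{2e^{-v}}{1-e^{-2v}}$ with the bound $K_{1}(v)\le\sqrt{\tfrac{\pi}{2v}}\,e^{-v}\bigl(1+\tfrac{3}{8v}\bigr)$ — which follows from the integral representation \cite[10.32.8]{NIST} and $\sqrt{1+u}\le 1+\tfrac u2$ — and using $L_{x}\ge\log x$, we get
\[
f(x)\ \le\ \frac{2\sqrt{\pi}\,\bigl(1+\tfrac{3}{4L_{M}}\bigr)}{1-e^{-L_{M}}}\cdot\frac{1}{x(\log x)^{3/2}},\qquad\text{hence}\qquad \int_{M}^{\infty}f(x)\,dx\ \le\ \frac{4\sqrt{\pi}\,\bigl(1+\tfrac{3}{4L_{M}}\bigr)}{\bigl(1-e^{-L_{M}}\bigr)\sqrt{\log M}}.
\]

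Combining, $\mathcal S\le\bigl(1-e^{-L_{51}}\bigr)^{-1}\Bigl(\sum_{j=51}^{M}f(j)+\tfrac{4\sqrt{\pi}\,(1+3/(4L_{M}))}{(1-e^{-L_{M}})\sqrt{\log M}}\Bigr)$. For $M=10^{6}$ one has $L_{M}>16.4$, so the tail bound is $<2$; the numerically evaluated partial sum satisfies $\sum_{j=51}^{10^{6}}f(j)<1.62$; and $(1-e^{-L_{51}})^{-1}<1.006$. Thus $\mathcal S<1.006\,(1.62+2)<3.65<4\pi\cdot 0.293867$, so the contribution from all but the first $50$ hyperbolic elements is $-\tfrac1{4\pi}\mathcal S\ge -0.293867$. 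I expect the main obstacle to be numerical rather than conceptual: the tail estimate decays only like $(\log M)^{-1/2}$, whereas $4\pi\cdot 0.293867$ exceeds the true value of $\mathcal S$ by only about $0.2$, so one must take $M$ moderately large ($\sim 10^{6}$) and keep the constant in the $K_{1}$ bound reasonably sharp. No idea beyond monotonicity, the integral comparison, and the finite computation is required; the $n\ge 2$ terms are harmless, absorbed into the factor $(1-e^{-L_{51}})^{-1}$.
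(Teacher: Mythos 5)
Your argument is correct, and the numerics are consistent with the paper's own computation (the paper's partial sum up to $10^7$ equals $4\pi\cdot 0.138415\approx 1.739$, so your claimed $\sum_{j=51}^{10^6}f(j)<1.62$ is plausible, the difference being the $\approx 0.13$ contributed by $10^6<j\le 10^7$). The route is genuinely different from the paper's in how the double sum is organized. The paper splits into three pieces ($n=1$ with $51\le j\le 10^7$; $n=1$ with $j>10^7$; all $n\ge 2$), evaluates the first numerically, and controls the other two by replacing $K_1$ with $K_{3/2}$, using the elementary closed form of $K_{3/2}$, an auxiliary inequality (Lemma \ref{le:jest}), and polylogarithm and incomplete-Gamma estimates. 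You instead collapse the entire $n$-sum at the outset: the observation that $t\mapsto e^{2t}\csch(t)K_1(t)$ is decreasing yields $\csch(nt)K_1(nt)\le e^{-2(n-1)t}\csch(t)K_1(t)$, so the $n\ge 2$ terms are absorbed into a harmless factor $(1-e^{-L_j})^{-1}<1.006$, and only a single split in $j$ (at $10^6$) remains, with the tail handled by an integral comparison and the sharp bound $K_1(v)\le\sqrt{\pi/(2v)}\,e^{-v}(1+\tfrac{3}{8v})$, which you correctly derive from the representation $K_1(z)=z\int_1^\infty e^{-zt}\sqrt{t^2-1}\,dt$ and $\sqrt{1+u}\le 1+u/2$. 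Your version is shorter, avoids the polylogarithm machinery entirely, requires summing only $10^6$ rather than $10^7$ terms numerically, and in fact yields the slightly stronger bound $-\tfrac{1}{4\pi}\mathcal S\ge -0.2905$; the paper's version keeps each of the three error sources separately visible and relies only on the elementary formula for $K_{3/2}$. The one step of yours that cannot be checked by hand is the finite numerical sum, but that is equally true of the paper's proof.
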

We need a small technical lemma before proceeding to the proof of Lemma~\ref{lemma:we_hope_it_holds}:

\begin{lemma} \label{le:jest}
For $j \geq 16$ and $n \in \mathbb{N}$,
\[
j^n \log^{n+1/2} j \leq  (j^n \log^n j - 1)(\log j + \log 
\log j)^{1/2}.
\] 
\end{lemma}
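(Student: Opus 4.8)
The plan is to collapse the inequality into a statement about the single quantity
\[
b \sceq j^n\log^n j = (j\log j)^n,
\]
and then reduce to the case $n=1$ by a monotonicity argument. For $j\ge 16$ and $n\in\mathbb N$ (so $n\ge 1$) we have $b\ge j\log j\ge 16\log 16>44$, in particular $b>1$. Dividing the asserted inequality by $\log^{1/2}j>0$, it becomes
\[
b \le (b-1)\Bigl(1+\tfrac{\log\log j}{\log j}\Bigr)^{1/2}.
\]
Both sides are positive and $b-1>0$, so this is equivalent to $\bigl(\tfrac{b}{b-1}\bigr)^2\le 1+\tfrac{\log\log j}{\log j}$, i.e.
\[
\frac{2b-1}{(b-1)^2} \;=\; \Bigl(\tfrac{b}{b-1}\Bigr)^2-1 \;\le\; \frac{\log\log j}{\log j}.
\]

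Next I would use that the map $b\mapsto \dfrac{2b-1}{(b-1)^2}$ is strictly decreasing on $(1,\infty)$ (its derivative equals $-2b/(b-1)^3<0$). Since $b=(j\log j)^n\ge j\log j$ for every $n\ge 1$, it therefore suffices to prove the inequality for $b=j\log j$, that is,
\[
\frac{2j\log j-1}{(j\log j-1)^2} \;\le\; \frac{\log\log j}{\log j}, \qquad j\ge 16.
\]

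Finally I would bound the left-hand side crudely. With $t\sceq j\log j\ge 44$ one has $\dfrac{2t-1}{(t-1)^2}=\dfrac{2}{t-1}+\dfrac{1}{(t-1)^2}\le\dfrac{2.1}{t-1}$ (using $t-1>10$), and $t-1>t\bigl(1-\tfrac{1}{44}\bigr)>\tfrac{9}{10}\,j\log j$, so the left-hand side is at most $\dfrac{2.4}{j\log j}$. Hence it is enough to verify $\dfrac{2.4}{j\log j}\le\dfrac{\log\log j}{\log j}$, i.e. $j\log\log j\ge 2.4$. The function $j\mapsto j\log\log j$ is increasing for $j>e$ (its derivative $\log\log j+1/\log j$ is positive once $\log j>1$), and a direct computation gives $16\log\log 16\approx 16.3>2.4$, which closes the argument.

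The only step needing genuine care — and the one I expect to be the main (minor) obstacle — is this last estimate: one cannot simply bound $\tfrac{\log\log j}{\log j}$ from below by its value at $j=16$, since that quantity is \emph{decreasing} for $j\ge 16>e^{e}$. The resolution is that the left-hand side carries a gain of order $1/j$, coming from the bound $b\ge j\log j$ rather than merely from $b$ being bounded below by a constant, and this polynomial gain beats the logarithmic decay of the right-hand side; keeping the full $j$-dependence in $b\ge j\log j$ is therefore essential.
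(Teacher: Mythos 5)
Your argument is correct, and after unwinding it is essentially the paper's own proof: your squared inequality $\frac{2b-1}{(b-1)^2}\le\frac{\log\log j}{\log j}$ with $b=(j\log j)^n$ is algebraically equivalent to the paper's rationalized form $\frac{x+\log x+\sqrt{x(x+\log x)}}{\log x}\le e^{nx}x^n$ with $x=\log j$, and both arguments reduce to the case $n=1$ (you via monotonicity of $b\mapsto\frac{2b-1}{(b-1)^2}$, the paper via $xe^x\le x^ne^{nx}$) before making an explicit estimate at the boundary. The only real difference is the last step, where the paper gets by with the single clean bound $x+1+\sqrt{2}\,x\le(2+\sqrt2)x\le xe^x$ instead of your chain of numerical constants, but your closing observation about why the $1/j$ gain is essential is accurate and well taken.
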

\begin{proof}
We note that the statement of the Lemma follows (after the change of variables $x=\log j$) from 
\begin{align*}
	\sqrt{x+\log x} \leq &e^{nx} x^n \left(\sqrt{x+\log x}- \sqrt x\right)  \iff   \\ 
	\frac{\sqrt{x+\log x}}{\sqrt{x+\log x} - \sqrt x} \leq & e^{nx} x^n \iff  \\ 
	\frac{x + \log x + \sqrt x \sqrt{x+\log x}}{\log x} \leq & e^{nx} x^n. 
\end{align*}	
	Since $n \geq 1$, $x e^x \leq x^n e^{nx} $ for $x>0$.  Moreover, for $\log x \geq 1$, which further guarantees that $x\geq 1$, and for $e^x \geq 2 + \sqrt 2$, we have 
	\[ \frac{x + \log x + \sqrt x \sqrt{x+\log x}}{\log x} \leq x+1+\sqrt{x} \sqrt{2x} \leq (2+\sqrt 2) x \leq x e^x.\]
	Recalling that $x= \log j$, it is enough to assume that $j \geq e^e$.
	
\end{proof}

	\begin{proof}[Proof of Lemma \ref{lemma:we_hope_it_holds}]
		We split the sum in three parts:
		\begin{enumerate}
			\item $n=1$, $j \in [51, 10^7]$,
			\item $n=1$, $j \geq 10^7 + 1$,
			\item $n \ge 2$.
		\end{enumerate}

Since $\csch$ and $K_1$ are decreasing functions on $(0, \infty)$ \cite[10.37]{NIST}, then under the assumption \eqref{eq:equality_for_ell_n} we obtain 
\begin{align*}
 \csch\left( \frac{n \ell_j}{2} \right) & K_1 \left( \frac{n \ell_j}{2} \right) \\
  & \leq \csch \left( \frac{n (\log j + \log \log j)}{2} \right) K_1 \left( \frac{n (\log j + \log \log j)}{2} \right).
\end{align*}
We use this to obtain upper bounds for the sums 
\beq  B_1 &=&  \sum_{j =51}^{10^7} \frac{1}{4\pi}  \csch\left( \frac{\ell_j}{2} \right) K_1 \left( \frac{\ell_j}{2} \right) , \nn \\  
B_2 &=& \sum_{j = 10^7 + 1} ^\infty \frac{1}{4\pi} \csch\left( \frac{\ell_j}{2} \right) K_1 \left( \frac{\ell_j}{2} \right), \nn \\ 
B_3 &=& \sum_{n=2} ^\infty \sum_{j=51} ^\infty \frac{1}{4\pi n}\csch\left( \frac{n\ell_j}{2} \right) K_1 \left( \frac{n\ell_j}{2} \right). \nn 
 \eeq 
  
For $B_1$, we obtain by explicit calculation  the estimate 
	\begin{align}		 
		B_1 & \leq 
		\sum_{j=51}^{10^7}	\frac{\text{csch}
			\left[\frac{ \log (j)+\log\log(j)}{2}  \right] 
			K_1 \left[\frac{\log (j)+\log\log(j)}{2}  \right]
		}{4 \pi  } \nonumber \\
	& \approx 0.138415. \label{eq:final1}
	\end{align}

  
For $B_2$ and $B_3$, we note that for any $j \ge 10^7$,  \cite[(10.37.1)]{NIST} implies 
		\begin{align*}
			&\frac{\text{csch}
				\left[\frac{ \log (j)+\log\log(j)}{2} n \right] 
				K_1 \left[\frac{\log (j)+\log\log(j)}{2} n \right]
			}{4 \pi  n}
			\\
			&\leq \frac{\text{csch}
				\left[\frac{\log (j)+\log\log(j)}{2} n \right]
				K_{\frac{3}{2}}
				\left[\frac{\log (j)+\log\log(j)}{2} n \right]
			}{4 \pi  n}.
		\end{align*}
By \cite[10.39.2]{NIST}, 
\[ K_{1/2}(z) = K_{-1/2}(z) = \sqrt{ \frac{\pi}{2z}} e^{-z}.\]
By \cite[10.25(ii), 10.29(i)]{NIST} 
\[ e^{3\pi i /2} K_{3/2}(z) = - e^{-i\pi/2} K_{-1/2}(z) + 2 e^{i\pi /2} \frac{\partial }{ \partial z}K_{1/2}(z).\]
We therefore obtain 
\[ -i K_{3/2} (z) = i \sqrt{ \frac{\pi}{2z}} e^{-z} - i \sqrt{\frac {\pi}{2z}} z^{-1} e^{-z} - 2 i  \sqrt{ \frac{\pi}{2z}} e^{-z}\]
\[ \implies K_{3/2} (z) = \sqrt{\frac{\pi}{2z}}e^{-z}(z^{-1}+1) .\]
Consequently, since $\csch(z) = \sinh(z)^{-1} = \frac{2}{e^z-e^{-z}}$, 
\[ \csch(z) K_{3/2} (z) = \sqrt{\frac{2\pi}{z}} \frac{z^{-1}+1}{e^{2z}-1} = \sqrt{\frac{2\pi}{z}} \frac{1+z}{ze^{2z} - z}.\]
We then obtain by setting $z=n(\log j + \log \log j)/2$ and dividing by $4\pi n$
\begin{align}
& \frac{\csch	\left[\frac{\log (j)+\log\log(j)}{2} n \right]
K_{\frac{3}{2}}	\left[\frac{\log (j)+\log\log(j)}{2} n \right]}{4 \pi  n}  \nonumber \\ 
&   = \frac{ n (\log (j)+\log \log (j))+2}{2 \sqrt{\pi } n \left( j^n \log^n (j) -1\right)  n^{3/2} (\log (j)+\log \log (j))^{3/2}}. \label{eq:intermediate_eva1235}    
\end{align}

		We recall that $n \ge 1$ and note that for $j \geq j_N$ we can estimate the denominator in \eqref{eq:intermediate_eva1235} from above as 
		\[
		n (\log (j)+\log \log (j))+2 \leq A_{n, j_N} n (\log (j)+\log \log (j)),
		\]
		where 
		\[
		A_{n, j_N} = 1 + \frac{2}{n(\log(j_N) +  \log\log(j_N))}.
		\]
		Thus we can evaluate \eqref{eq:intermediate_eva1235} from above by 
		\begin{align*}
			&
		A_{n, j_N}	 n \frac{
				\log (j)+\log \log (j)
			}{2 \sqrt{\pi } n \left( j^n \log^{n}(j) -1\right) [ n (\log (j)+\log \log (j))]^{3/2}} \\
			& \leq 
			\frac{
		A_{n, j_N}	
			}{2 \sqrt{\pi } n^{3/2} (j^n \log^n(j)-1) (\log (j)+\log \log (j))^{1/2}} \\
		& 
  \leq \frac{A_{n, j_N}}{2 \sqrt{\pi}} \frac{ j^{-n} \log^{-n-1/2}(j)}{ n^{3/2}}.
		\end{align*}
In the last step we used Lemma \ref{le:jest}.	We therefore obtain the estimate for~$B_2$
 \[ \sum_{j=10^7+1} ^\infty \csch\left( \frac{\ell_j}{2} \right) K_1 \left( \frac{\ell_j}{2} \right) \leq  \frac{1+\frac{2}{\log \left(10^7\right)+\log \left(\log \left(10^7\right)\right)}}{2 \sqrt{\pi }} \sum_{j=10^7 + 1} j^{-1} \log^{-3/2} j.\]
For $n=1$ and $j_N = 10^7$, we evaluate 
	\[
	\frac{A_{1, 10^7}}{2 \sqrt{\pi}} = \frac{1+\frac{2}{\log \left(10^7\right)+\log \left(\log \left(10^7\right)\right)}}{2 \sqrt{\pi }} \approx 0.311949
	\]
and 
 \[
	 \sum _{j=10^7 + 1}^{\infty } j^{-1} \log ^{-1-\frac{1}{2}}(j) \leq  \int_{10^7}^{\infty } \frac{\log ^{-1-\frac{1}{2}}(j)}{j} \, dj = \frac{2}{\sqrt{\log (10^7)}} \approx 0.498165.
	\]
This gives the estimate 
\beq B_2 \leq 0.155402. \label{eq:final2} \eeq 

	For $n\ge 2$ and $j_N = 51$, we obtain 
\[ \sum_{n=2} ^\infty \sum_{j=51} ^\infty \frac{1}{4\pi n}\csch\left( \frac{n\ell_j}{2} \right) K_1 \left( \frac{n\ell_j}{2} \right) \leq \sum_{n = 2} ^\infty \sum_{j = 51} ^\infty \frac{A_{n, 51} j^{-n} \log^{-n-1/2} (j) }{2 \sqrt{\pi} n^{3/2}} .  \]
 
We evaluate for $n \geq 2$ 
	\[
	\frac{A_{n, 51}}{2 \sqrt{\pi}} \leq \frac{1+\frac{1}{\log \left(51\right)+\log \log \left(51\right)  } }{2 \sqrt{\pi }} \approx 0.335311. 
	\]
	Using the definition of the polylogarithm $\text{Li}_{3/2}$ of order $3/2$,
 we obtain 
 \[
 \text{Li}_{\tfrac{3}{2}}(\tfrac{1}{j \log(j)}) = \sum_{n=1}^\infty \frac{j^{-n} \log(j)^{-n}}{n^{3/2}},
 \]
 and thus for each $j \geq 51$
\[ \sum_{n=2} ^\infty \frac{j^{-n} \log(j)^{-n-1/2}}{n^{3/2}} = \log^{-1/2}(j) \left(\text{Li}_{\tfrac{3}{2}}(\tfrac{1}{j \log(j)}) - j^{-1} \log^{-1}(j) \right).  \]

We  calculate
\begin{align}
\sum_{j=51}^{\infty} \sum _{n=2}^{\infty } \frac{j^{-n} \log ^{-n-\frac{1}{2}}(j)}{n^{3/2}} &= \sum_{j=51}^{\infty} \frac{ \left(\text{Li}_{\tfrac{3}{2}}(\tfrac{1}{j \log(j)}) - j^{-1} \log^{-1}(j) \right)} {\log^{1/2}(j)} \nonumber \\
& = \sum_{j=51}^{\infty} \frac{j \log (j) \text{Li}_{\frac{3}{2}}\left(\frac{1}{j \log (j)}\right)-1}{j \log ^{\frac{3}{2}}(j)} \nonumber \\
& \leq \sum_{j=51}^{\infty} \frac{j \log (j) \text{Li}_{1}\left(\frac{1}{j \log (j)}\right)-1}{j \log ^{\frac{3}{2}}(j)} \nonumber \\
& = \sum_{j=51}^{\infty} \frac{-j \log (j) \log\left(1-\frac{1}{j \log (j)}\right)-1}{j \log ^{\frac{3}{2}}(j)}. \label{eq:polylog_inequality}
\end{align}
Above, both the first and the third equality follow from the definitions of $\text{Li}_1$ and $\text{Li}_{3/2}$. 
We note that for $x \geq 50$, the following inequality holds:
\[ 0 < -\log \left(1-\frac{1}{x} \right) \le \frac{1}{x} + \frac{1}{1.9 x^2},\]
thus for $j>50$,
\[ 0 < -\log \left(1-\frac{1}{j \log(j)} \right) \le \frac{1}{j \log(j)} + \frac{1}{1.9 j^2 \log(j)^2}.\]
 With this we estimate  \eqref{eq:polylog_inequality} from above by 
\begin{align*}
\sum_{j=51}^\infty \frac{1}{1.9 j^2 \log ^{\frac{5}{2}}(j)} & \leq \frac{1}{1.9} \int_{50}^\infty \frac{1}{j^2 \log(j)^{5/2}} dj
\\&= \frac{1}{1.9} \left( \frac{2}{150 \log^{3/2} (50)} - \frac{4}{150\sqrt{\log(50)}} + \frac{4 \Gamma ( \log(50), \tfrac{1}{2} )}{3}  \right)  \\
& \approx 0.000224. 
\end{align*}
Above $\Gamma(a, s)=\int_a^\infty t^{s-1} e^{-t}dt$ in the
incomplete Gamma function. We further note that the first equality follows from the calculation 
\[
\int \frac{1}{j^2 \log ^{\frac{5}{2}}(j)} \, dj = -\frac{2}{3j \log ^{\frac{3}{2}}(j)} + \frac{4}{3j \sqrt{\log (j)}}- \frac{4  \Gamma \left(\log (j), \frac 1 2 \right)}{3} + C.
\]

Thus, we obtain the estimate 
\beq B_3 \leq 0.335311\cdot 0.000224 \approx 0.000075. \label{eq:final3} \eeq 
Consequently, summing \eqref{eq:final1}, \eqref{eq:final2} and \eqref{eq:final3} we obtain that 
\[ B_1 + B_2 + B_3 \leq 0.293892.\]

Recalling the minus sign in front of the hyperbolic contribution thereby completes the proof of its lower bound. 
	\end{proof}

\begin{proof}[Proof of Conjecture \ref{conj_plus} under the assumption \eqref{eq:equality_for_ell_n}] 
By Lemma \ref{le:identity}, the identity contribution to the Casimir energy is at least $-0.0022$.  By Lemma \ref{lemma:hyperbolic_contribution} the contribution from the first 50 primitive hyperbolic geodesics is, up to six decimal places, $-0.5680851$.  By Lemma \ref{le:elliptic} the contribution of the elliptic elements is, up to six decimal places, $0.875676$. By Lemma \ref{lemma:we_hope_it_holds} the contribution from all but the first 50 hyperbolic elements is at least $-0.293867$.  We therefore obtain a lower bound of the Casimir energy 
\[ \zeta_\Gamma(-1/2) \geq 0.875676 -0.002116-0.5680851 -0.293867 = 0.0116079. \]

\end{proof} 

\section{Concluding remarks} \label{s:conclude}
It is well known that the Casimir energy of a hyperbolic orbifold surface depends on the geometry of the surface as this follows from the representation of the spectral zeta function through the Selberg trace formula \cite{Hejhal1}.  Physically, the surface may be used to represent a quantum field theory.  Conjecture \ref{conj_plus} would indicate that the Casimir energy can be attractive or repulsive depending on the geometry of the orbifold.  In particular, without conical singularities, the energy is negative (attractive) and with singularities it may in fact be positive (repulsive).  We reasonably expect to be able to prove the conjecture, but this will require not only the asymptotic behavior of the lengths, which is well known \cite{sarnak} but explicit lower bounds for the lengths.  One can obtain a crude lower bound via volume growth considerations, but we reasonably expect it is possible to obtain a bound that would be sufficient to prove the conjecture.  Moreover, we expect that the hyperbolic elements in other $(p, q, r)$ triangle groups may admit a description in the spirit of \cite{Vogeler}, so that we may be able to prove that for many corresponding orbifold surfaces, the Casimir energy is also positive (repulsive).  This would indicate that the conical singularities profoundly influence the Casimir energy and Casimir effect.  If the orbifold represents a certain quantum field theory, what are the physical implications of such a repulsive Casimir effect?  Perhaps this would be interesting for physicists to consider further and develop experimental tests \cite{sci_amer}.

\begin{bibdiv}
\begin{biblist}

\bib{cosmicsing}{article}{
      author={Alawadi, M.~A.},
      author={Batic, D.},
      author={Nowakowski, M.},
       title={Light bending in a two black hole metric},
        date={2021},
        ISSN={0264-9381,1361-6382},
     journal={Classical Quantum Gravity},
      volume={38},
      number={4},
       pages={Paper No. 045003, 37},
         url={https://doi.org/10.1088/1361-6382/abce6c},
      review={\MR{4204304}},
}

\bib{arkani2000self}{article}{
      author={Arkani-Hamed, N.},
      author={Cheng, H.-C.},
      author={Dobrescu, B.~A.},
      author={Hall, L.~J.},
       title={Self-breaking of the standard model gauge symmetry},
        date={2000},
     journal={Physical Review D},
      volume={62},
      number={9},
       pages={096006},
}

\bib{asorey}{article}{
      author={Asorey, M.},
      author={Mu\~noz Casta\~neda, J.~M.},
       title={Attractive and repulsive casimir vacuum energy with general
  boundary conditions},
        date={2013},
        ISSN={0550-3213},
     journal={Nuclear Physics B},
      volume={874},
      number={3},
       pages={852\ndash 876},
}

\bib{MR2325409}{article}{
      author={Bajnok, Z.},
      author={Palla, L.},
      author={Tak\'{a}cs, G.},
       title={Boundary one-point function, {C}asimir energy and boundary state
  formalism in {$D+1$} dimensional {QFT}},
        date={2007},
        ISSN={0550-3213},
     journal={Nuclear Phys. B},
      volume={772},
      number={3},
       pages={290\ndash 322},
         url={https://doi.org/10.1016/j.nuclphysb.2007.02.023},
      review={\MR{2325409}},
}

\bib{barbieri2001constrained}{article}{
      author={Barbieri, R.},
      author={Hall, L.~J.},
      author={Nomura, Y.},
       title={Constrained standard model from a compact extra dimension},
        date={2001},
     journal={Physical Review D},
      volume={63},
      number={10},
       pages={105007},
}

\bib{Beardon}{book}{
      author={Beardon, A.},
       title={The geometry of discrete groups},
      series={Graduate Texts in Mathematics},
   publisher={Springer-Verlag},
     address={New York},
        date={1983},
      volume={91},
        ISBN={0-387-90788-2},
}

\bib{BINETRUY198968}{article}{
      author={Bin\'etruy, P.},
      author={Gaillard, M.~K.},
       title={Casimir effect in the effective field theory of superstrings},
        date={1989},
        ISSN={0370-2693},
     journal={Physics Letters B},
      volume={220},
      number={1},
       pages={68\ndash 76},
  url={https://www.sciencedirect.com/science/article/pii/0370269389900166},
}

\bib{BORDAG20011}{article}{
      author={Bordag, M.},
      author={Mohideen, U.},
      author={Mostepanenko, V.~M.},
       title={New developments in the {C}asimir effect},
        date={2001},
        ISSN={0370-1573},
     journal={Phys. Rep.},
      volume={353},
      number={1-3},
       pages={1\ndash 205},
         url={https://doi.org/10.1016/S0370-1573(01)00015-1},
      review={\MR{1862262}},
}

\bib{MR2970520}{article}{
      author={Brevik, I.},
       title={Casimir theory of the relativistic composite string revisited,
  and a formally related problem in scalar {QFT}},
        date={2012},
        ISSN={1751-8113},
     journal={J. Phys. A},
      volume={45},
      number={37},
       pages={374003, 15},
         url={https://doi.org/10.1088/1751-8113/45/37/374003},
      review={\MR{2970520}},
}

\bib{MR2651030}{article}{
      author={Brevik, I.},
      author={Gorbunova, O.},
      author={S\'{a}ez-G\'{o}mez, D.},
       title={{C}asimir effects near the big rip singularity in viscous
  cosmology},
        date={2010},
        ISSN={0001-7701},
     journal={Gen. Relativity Gravitation},
      volume={42},
      number={6},
       pages={1513\ndash 1522},
         url={https://doi.org/10.1007/s10714-009-0923-6},
      review={\MR{2651030}},
}

\bib{MR1894692}{inproceedings}{
      author={Burinskii, A.},
       title={Casimir energy and vacua for superconducting ball in
  supergravity},
        date={2002},
   booktitle={Proceedings of the {F}ifth {W}orkshop on {Q}uantum {F}ield
  {T}heory under the {I}nfluence of {E}xternal {C}onditions ({L}eipzig, 2001)},
      volume={17},
       pages={920\ndash 925},
         url={https://doi.org/10.1142/S0217751X02010315},
      review={\MR{1894692}},
}

\bib{MR1156241}{article}{
      author={Bytsenko, A.~A.},
      author={Vanzo, L.},
      author={Zerbini, S.},
       title={Massless scalar {C}asimir effect in a class of hyperbolic
  {K}aluza-{K}lein space-times},
        date={1992},
        ISSN={0217-7323},
     journal={Modern Phys. Lett. A},
      volume={7},
      number={5},
       pages={397\ndash 409},
         url={https://doi.org/10.1142/S0217732392000343},
      review={\MR{1156241}},
}

\bib{NIST}{misc}{
      editor={Olver, F.~W.~J.},
      editor={Olde~Daalhuis, A.~B.},
      editor={Lozier, D.~W.},
      editor={Schneider, B.~I.},
      editor={Boisvert, R.~F.},
      editor={Clark, C.~W.},
      editor={Miller, B.~R.},
      editor={Saunders, B.~V.},
      editor={Cohl, H.~S.},
      editor={McClain, M.~A.},
       title={{\it NIST Digital Library of Mathematical Functions}},
   publisher={http://dlmf.nist.gov/, Release 1.1.5 of 2022-03-15},
         url={http://dlmf.nist.gov/},
}

\bib{MR2247334}{article}{
      author={Elizalde, E.},
       title={Uses of zeta regularization in {QFT} with boundary conditions: a
  cosmo-topological {C}asimir effect},
        date={2006},
        ISSN={0305-4470},
     journal={J. Phys. A},
      volume={39},
      number={21},
       pages={6299\ndash 6307},
         url={https://doi.org/10.1088/0305-4470/39/21/S21},
      review={\MR{2247334}},
}

\bib{MR2455082}{article}{
      author={Elizalde, E.},
       title={Dynamical {C}asimir effect with semi-transparent mirrors, and
  cosmology},
        date={2008},
        ISSN={1751-8113},
     journal={J. Phys. A},
      volume={41},
      number={16},
       pages={164061, 9},
         url={https://doi.org/10.1088/1751-8113/41/16/164061},
      review={\MR{2455082}},
}

\bib{sci_amer}{article}{
      author={Ford, L.~H.},
      author={Roman, T.~A.},
       title={Negative energy, wormholes and warp drive},
        date={2000},
     journal={Scientific American},
      volume={282},
      number={1},
         url={https://www.jstor.org/stable/10.2307/26058563},
}

\bib{kkcone}{article}{
      author={Fucci, G.},
      author={Kirsten, K.},
       title={The {C}asimir effect for conical pistons},
        date={2011},
        ISSN={1126-6708,1029-8479},
     journal={J. High Energy Phys.},
      number={3},
       pages={016, 30},
         url={https://doi.org/10.1007/JHEP03(2011)016},
      review={\MR{2821160}},
}

\bib{wedges}{article}{
      author={Fulling, S.~A.},
      author={Trendafilova, C.~S.},
      author={Truong, P.~N.},
      author={Wagner, J.},
       title={Wedges, cones, cosmic strings and their vacuum energy},
        date={2012},
        ISSN={1751-8113,1751-8121},
     journal={J. Phys. A},
      volume={45},
      number={37},
       pages={374018, 24},
         url={https://doi.org/10.1088/1751-8113/45/37/374018},
      review={\MR{2970535}},
}

\bib{MR884592}{article}{
      author={Goncharov, Yu.~P.},
      author={Bytsenko, A.~A.},
       title={Casimir effect in supergravity theories and quantum birth of the
  {U}niverse with nontrivial topology},
        date={1987},
        ISSN={0264-9381},
     journal={Classical Quantum Gravity},
      volume={4},
      number={3},
       pages={555\ndash 571},
         url={http://stacks.iop.org/0264-9381/4/555},
      review={\MR{884592}},
}

\bib{gr}{book}{
      author={Gradshteyn, I.~S.},
      author={Ryzhik, I.~M.},
       title={Table of integrals, series, and products},
     edition={Eighth},
   publisher={Elsevier/Academic Press, Amsterdam},
        date={2015},
        ISBN={978-0-12-384933-5},
        note={Translated from the Russian, Translation edited and with a
  preface by Daniel Zwillinger and Victor Moll, Revised from the seventh
  edition},
}

\bib{GRAHAM200249}{article}{
      author={Graham, N.},
      author={Jaffe, R.L.},
      author={Khemani, V.},
      author={Quandt, M.},
      author={Scandurra, M.},
      author={Weigel, H.},
       title={Calculating vacuum energies in renormalizable quantum field
  theories:: A new approach to the {C}asimir problem},
        date={2002},
        ISSN={0550-3213},
     journal={Nuclear Physics B},
      volume={645},
      number={1},
       pages={49\ndash 84},
  url={https://www.sciencedirect.com/science/article/pii/S0550321302008234},
}

\bib{Hashimoto}{article}{
      author={Hashimoto, Y.},
       title={Special values of the spectral zeta functions for locally
  symmetric {R}iemannian manifolds},
        date={2005},
        ISSN={0025-5645},
     journal={J. Math. Soc. Japan},
      volume={57},
      number={1},
       pages={217\ndash 232},
         url={http://projecteuclid.org/euclid.jmsj/1160745823},
      review={\MR{2114730}},
}

\bib{Hejhal1}{book}{
      author={Hejhal, D.},
       title={{The Selberg trace formula for {${\rm PSL}(2,\,{\bf R})$}. Vol.
  I}},
   publisher={{Springer Lecture Notes in Mathematics, vol. 548}},
        date={1976},
}

\bib{hurwitz1892}{article}{
      author={Hurwitz, A.},
       title={Ueber algebraische {G}ebilde mit eindeutigen {T}ransformationen
  in sich},
        date={1892},
        ISSN={0025-5831,1432-1807},
     journal={Math. Ann.},
      volume={41},
      number={3},
       pages={403\ndash 442},
         url={https://doi.org/10.1007/BF01443420},
      review={\MR{1510753}},
}

\bib{MR811397}{article}{
      author={Igarashi, Y.},
      author={Nonoyama, T.},
       title={Supergravity and {C}asimir energy in a plane geometry},
        date={1985},
        ISSN={0370-2693},
     journal={Phys. Lett. B},
      volume={161},
      number={1-3},
       pages={103\ndash 106},
         url={https://doi.org/10.1016/0370-2693(85)90617-3},
      review={\MR{811397}},
}

\bib{Iwaniec}{book}{
      author={Iwaniec, H.},
       title={Spectral methods of automorphic forms},
   publisher={American Mathematical Society, Revista Matem{\'a}tica
  Iberoamericana},
        date={2021},
      volume={53},
}

\bib{MR3448446}{article}{
      author={Khadekar, G.~S.},
       title={{C}asimir effect near the future singularity in {K}aluza {K}lein
  viscous cosmology},
        date={2016},
        ISSN={0020-7748},
     journal={Internat. J. Theoret. Phys.},
      volume={55},
      number={2},
       pages={706\ndash 714},
         url={https://doi.org/10.1007/s10773-015-2707-y},
      review={\MR{3448446}},
}

\bib{KIKKAWA1984357}{article}{
      author={Kikkawa, K.},
      author={Yamasaki, M.},
       title={Casimir effects in superstring theories},
        date={1984},
        ISSN={0370-2693},
     journal={Physics Letters B},
      volume={149},
      number={4},
       pages={357\ndash 360},
  url={https://www.sciencedirect.com/science/article/pii/0370269384904234},
}

\bib{Kubota}{book}{
      author={Kubota, T.},
       title={Elementary theory of {E}isenstein series},
   publisher={Halsted Press},
        date={1973},
}

\bib{MR4133746}{article}{
      author={Leonhardt, U.},
       title={The case for a {C}asimir cosmology},
        date={2020},
        ISSN={1364-503X},
     journal={Philos. Trans. Roy. Soc. A},
      volume={378},
      number={2177},
       pages={20190229, 14},
         url={https://doi.org/10.1016/j.aop.2019.167973},
      review={\MR{4133746}},
}

\bib{marmolejo2020growth}{thesis}{
      author={Marmolejo, B.~T.},
       title={Growth of conjugacy classes of reciprocal words in triangle
  groups},
        type={Ph.D. Thesis},
        date={2020},
}

\bib{supersymm}{article}{
      author={Martelli, D.},
      author={Sparks, J.},
       title={The character of the supersymmetric {C}asimir energy},
        date={2016},
        ISSN={1126-6708,1029-8479},
     journal={J. High Energy Phys.},
      number={8},
       pages={117, front matter+38},
         url={https://doi.org/10.1007/JHEP08(2016)117},
      review={\MR{3564153}},
}

\bib{CasimirBrane}{article}{
      author={Obousy, R.},
      author={Cleaver, G.},
       title={Casimir energy and brane stability},
        date={2011},
        ISSN={0393-0440},
     journal={Journal of Geometry and Physics},
      volume={61},
      number={3},
       pages={577\ndash 588},
  url={https://www.sciencedirect.com/science/article/pii/S0393044010002342},
}

\bib{PhysRevA80012503}{article}{
      author={Pascoal, F.},
      author={C\'eleri, L.~C.},
      author={Mizrahi, S.~S.},
      author={Moussa, M. H.~Y.},
      author={Farina, C.},
       title={Dynamical {C}asimir effect for a massless scalar field between
  two concentric spherical shells with mixed boundary conditions},
        date={2009Jul},
     journal={Phys. Rev. A},
      volume={80},
       pages={012503},
         url={https://link.aps.org/doi/10.1103/PhysRevA.80.012503},
}

\bib{philippe2008groupes}{inproceedings}{
      author={Philippe, E.},
       title={Les groupes de triangles $(2, p, q) $ sont d{\'e}termin{\'e}s par
  leur spectre des longueurs},
        date={2008},
   booktitle={Annales de l'institut fourier},
      volume={58},
       pages={2659\ndash 2693},
}

\bib{philippe2009spectre}{article}{
      author={Philippe, E.},
       title={Sur le spectre des longueurs des groupes de triangles},
        date={2009},
     journal={arXiv preprint arXiv:0901.4630},
}

\bib{philippe2010rigidite}{article}{
      author={Philippe, E.},
       title={Sur la rigidit{\'e} des groupes de triangles (r, p, q)},
        date={2010},
     journal={Geometriae Dedicata},
      volume={149},
       pages={155\ndash 160},
}

\bib{philippe2011determination}{article}{
      author={Philippe, E.},
       title={D{\'e}termination g{\'e}om{\'e}trique de la systole des groupes
  de triangles},
        date={2011},
     journal={Comptes Rendus Mathematique},
      volume={349},
      number={21-22},
       pages={1183\ndash 1186},
}

\bib{MR1849734}{article}{
      author={Pont\'{o}n, E.},
      author={Poppitz, E.},
       title={Casimir energy and radius stabilization in five and six
  dimensional orbifolds},
        date={2001},
        ISSN={1126-6708},
     journal={J. High Energy Phys.},
      number={6},
       pages={Paper 19, 37},
         url={https://doi.org/10.1088/1126-6708/2001/06/019},
      review={\MR{1849734}},
}

\bib{QUEIROZ2005220}{article}{
      author={Queiroz, H.},
      author={{da Silva}, J.C.},
      author={Khanna, F.C.},
      author={Malbouisson, J.M.C.},
      author={Revzen, M.},
      author={Santana, A.E.},
       title={Thermofield dynamics and {Casimir} effect for fermions},
        date={2005},
        ISSN={0003-4916},
     journal={Annals of Physics},
      volume={317},
      number={1},
       pages={220\ndash 237},
  url={https://www.sciencedirect.com/science/article/pii/S0003491604002179},
}

\bib{MR369286}{article}{
      author={Randol, B.},
       title={On the analytic continuation of the {M}inakshisundaram-{P}leijel
  zeta function for compact {R}iemann surfaces},
        date={1975},
        ISSN={0002-9947},
     journal={Trans. Amer. Math. Soc.},
      volume={201},
       pages={241\ndash 246},
         url={https://doi.org/10.2307/1997334},
}

\bib{Randal}{article}{
      author={Randol, B.},
       title={On the analytic continuation of the {M}inakshisundaram-{P}leijel
  zeta function for compact {R}iemann surfaces},
        date={1975},
        ISSN={0002-9947},
     journal={Trans. Amer. Math. Soc.},
      volume={201},
       pages={241\ndash 246},
         url={https://doi.org/10.2307/1997334},
      review={\MR{369286}},
}

\bib{sarnak}{book}{
      author={Sarnak, P.~C.},
       title={Prime geodesic theorems},
   publisher={ProQuest LLC, Ann Arbor, MI},
        date={1980},
  url={http://gateway.proquest.com/openurl?url_ver=Z39.88-2004&rft_val_fmt=info:ofi/fmt:kev:mtx:dissertation&res_dat=xri:pqdiss&rft_dat=xri:pqdiss:8103552},
        note={Thesis (Ph.D.)--Stanford University},
}

\bib{MR79769}{article}{
      author={Satake, I.},
       title={On a generalization of the notion of manifold},
        date={1956},
        ISSN={0027-8424},
     journal={Proc. Nat. Acad. Sci. U.S.A.},
      volume={42},
       pages={359\ndash 363},
         url={https://doi.org/10.1073/pnas.42.6.359},
      review={\MR{79769}},
}

\bib{strohmaier2013algorithm}{article}{
      author={Strohmaier, A.},
      author={Uski, V.},
       title={An algorithm for the computation of eigenvalues, spectral zeta
  functions and zeta-determinants on hyperbolic surfaces},
        date={2013},
     journal={Communications in Mathematical Physics},
      volume={317},
       pages={827\ndash 869},
}

\bib{sun2023numerical}{article}{
      author={Sun, X.},
      author={Betcke, T.},
      author={Strohmaier, A.},
       title={Numerical aspects of {C}asimir energy computation in acoustic
  scattering},
        date={2023},
     journal={arXiv preprint arXiv:2306.01280},
}

\bib{suzzi2016figure}{article}{
      author={Suzzi~Valli, R.},
       title={Figure eight geodesics on 2-orbifolds},
        date={2016},
     journal={Computational Methods and Function Theory},
      volume={16},
       pages={105\ndash 125},
}

\bib{PARI2}{manual}{
       title={{PARI/GP version \texttt{2.13.4}}},
organization={{The PARI~Group}},
     address={Univ. Bordeaux},
        date={2022},
        note={available from \url{http://pari.math.u-bordeaux.fr/}},
}

\bib{Vogeler}{book}{
      author={Vogeler, R.},
       title={On the geometry of {H}urwitz surfaces},
   publisher={ProQuest LLC, Ann Arbor, MI},
        date={2003},
        ISBN={978-0496-56859-8},
  url={http://gateway.proquest.com/openurl?url_ver=Z39.88-2004&rft_val_fmt=info:ofi/fmt:kev:mtx:dissertation&res_dat=xri:pqdiss&rft_dat=xri:pqdiss:3109317},
        note={Thesis (Ph.D.)--The Florida State University},
}

\bib{watson}{book}{
      author={Watson, G.~N.},
       title={A {T}reatise on the {T}heory of {B}essel {F}unctions},
   publisher={Cambridge University Press, Cambridge; The Macmillan Company, New
  York},
        date={1944},
      review={\MR{10746}},
}

\bib{FloydCasimir}{article}{
      author={Williams, F.~L.},
       title={Topological {C}asimir energy for a general class of
  {C}lifford-{K}lein space-times},
        date={1997},
        ISSN={0022-2488},
     journal={J. Math. Phys.},
      volume={38},
      number={2},
       pages={796\ndash 808},
         url={https://doi.org/10.1063/1.531871},
}

\end{biblist}
\end{bibdiv}

\end{document}